\newcommand{\email}[1]{\href{mailto:#1}{\nolinkurl{#1}}}
\renewcommand{\leq}{\ensuremath{\leqslant}}
\renewcommand{\geq}{\ensuremath{\geqslant}}
\newcommand{\minimize}[2]{\ensuremath{\underset{\substack{{#1}}}%
{\text{\rm minimize}}\;\;#2 }}
\newcommand{\argmind}[2]{\ensuremath{\underset{\substack{{#1}}}%
{\text{\rm argmin}}\;\;#2 }}
\newcommand{\argmax}[2]{\ensuremath{\underset{\substack{{#1}}}%
{\text{\rm argmax}}\;\;#2 }}
\newcommand{\Pair}[2]{\big\langle{{#1},{#2}}\big\rangle}
\newcommand{\menge}[2]{\big\{{#1}~\big |~{#2}\big\}}
\newcommand{\RX}{\ensuremath{\left(-\infty,+\infty\right]}}
\newcommand{\XX}{\ensuremath{{\mathscr X}}}
\newcommand{\HH}{\ensuremath{{\mathcal H}}}
\newcommand{\YY}{\ensuremath{{\mathcal Y}}}
\newcommand{\GG}{\ensuremath{{\mathcal G}}}
\newcommand{\prox}{\ensuremath{\text{\rm prox}}}
\newcommand{\RR}{\ensuremath{\mathbb{R}}}
\newcommand{\RPP}{\ensuremath{\left]0,+\infty\right[}}
\newcommand{\NN}{\ensuremath{\mathbb N}}
\newcommand{\KK}{\ensuremath{\mathbb K}}
\newcommand{\dom}{\ensuremath{\text{\rm dom}\,}}
\newcommand{\inte}{\ensuremath{\text{\rm int}\,}}
\newcommand{\ri}{\ensuremath{\text{\rm ri}\,}}
\newcommand{\norm}[1]{\|#1\|}
\newcommand{\BP}{\ensuremath{\EuScript P}}
\newcommand{\BD}{\ensuremath{\EuScript D}\,}
\newcommand{\BF}{\ensuremath{\EuScript F}}
\newtheorem{theorem}{Theorem}[section]
\newtheorem{lemma}[theorem]{Lemma}
\newtheorem{proposition}[theorem]{Proposition}
\theoremstyle{plain}{\theorembodyfont{\rmfamily}%
}
\theoremstyle{plain}{\theorembodyfont{\rmfamily}%
}
\theoremstyle{plain}{\theorembodyfont{\rmfamily}%
\newtheorem{remark}[theorem]{Remark}}
\theoremstyle{plain}{\theorembodyfont{\rmfamily}%
\theoremstyle{plain}{\theorembodyfont{\rmfamily}%
}
\theoremstyle{plain}{\theorembodyfont{\rmfamily}%
\newtheorem{definition}[theorem]{Definition}}
\theoremstyle{plain}{\theorembodyfont{\rmfamily}
\newtheorem{problem}[theorem]{Problem}}
\theoremstyle{plain}{\theorembodyfont{\rmfamily}
}

\numberwithin{equation}{section}
\setlength{\itemsep}{1pt} 
\usepackage[a4paper,colorlinks,breaklinks,unicode]{hyperref}

\begin{document}

\title{\sffamily\LARGE Smoothing technique for nonsmooth composite \\ minimization with linear operator}

\author{Quang Van Nguyen\footnotemark[1], \: Olivier Fercoq\footnotemark[2], \: and Volkan Cevher\footnotemark[1] \\[5mm]
\small
\small \footnotemark[1]\:  Laboratory for Information and Inference Systems (LIONS)\\
\small  \'Ecole Polytechnique F\'ed\'erale de Lausanne (EPFL), Switzerland\\
\small \footnotemark[2]\: T\'el\'ecom ParisTech, Institut Mines-T\'el\'ecom Paris, France\\
\small \email{quang.nguyen@epfl.ch},\: \email{olivier.fercoq@telecom-paristech.fr}\\
\small \email{volkan.cevher@epfl.ch}
\small\date{~}\\
}

\maketitle
\setcounter{page}{1}

\begin{abstract}
We introduce and analyze an algorithm for 
the minimization of convex functions that are the sum of
differentiable terms and proximable terms composed 
with linear operators. The method builds upon the recently developed smoothed gap technique.
In addition to a precise convergence rate result, valid even in the
presence of linear inclusion constraints, this new method allows an explicit treatment of the gradient of differentiable
functions and can be enhanced with line-search. We also 
study the consequences of restarting the acceleration of the algorithm at a
given frequency. These new features are not classical for 
primal-dual methods and allow us to solve difficult large scale 
convex optimization problems. We numerically illustrate the superior performance of the algorithm on basis pursuit,
TV-regularized least squares regression and L1 regression problems against the state-of-the-art.\end{abstract}

{\bfseries Key words.}
composite minimization, forward-backward, multivariate minimization, 
atomization energies prediction, smoothing technique, total variation regularization.

{\bfseries Mathematics Subject Classifications (2010)} 
47H05, 49M29, 49M27, 90C25

\section{Introduction}
Nonlinear and non-smooth convex optimization problems are widely presented in many disciplines, 
including signal and image processing, operations research, machine learning, game theory, 
economics, and mechanics. In this paper, we consider the following problem.
\begin{problem}
\label{p1}
Let $\HH$ and $\GG$ be real Hilbert spaces, let $M\colon\HH\to\GG$ be a bounded linear operator, 
and let  $f\colon\HH\to\RR$, $g\colon\HH\to\RX$ and $h\colon\GG\to\RX$ be proper, closed lower semi-continuous 
convex functions where $f$ is moreover assumed to have $L_f$-Lipschitz gradient. 
Consider the following generic convex minimization problem
\begin{equation}
\label{prob1}
F^\star = \min_{x\in\HH}\;\big\{f(x)+g(x)+h(Mx)\big\}
\end{equation}
under the assumption that its set of minimizers $\BP^\star$ is non-empty.
\end{problem}

Following \cite[Definition~19.11]{BC11_B}, if we suppose that $\emptyset\not=M(\dom (f+g))\cap\dom h = M(\dom g)\cap\dom h$ 
and set $\BF \colon\HH\times\GG\colon (x,y)\mapsto f(x)+g(x)+h(Mx -y)$ 
then Problem~\ref{p1} becomes the primal problem associated to $\BF$ and its associated dual problem is
\begin{equation}
\label{prob2}
G^\star = \max_{y\in\GG}\big\{G(y) = \min_{x\in\HH}\big\{f(x)+g(x)+\Pair{Mx}{y}-h^*(y)\big\}\big\},
\end{equation}
%
where $\dom{f}=\menge{x\in\HH}{f(x) < +\infty}$ is the domain of $f$ 
and $h^*\colon\GG\to\RX\colon y\mapsto\max\limits_{\bar y\in\GG}\;\Pair{\bar y}{y}-h(\bar y)$ 
is the Fenchel-Moreau conjugate function of $h$. In this case, 
\cite[Corollary~19.19]{BC11_B} states that the set of solution $\BD^\star$ to \eqref{prob2} is non-empty, 
and furthermore, a point $x^\star\in\HH$ is in $\BP^\star$ 
if and only if there exists $y^\star\in\BD^\star$ such that 
$(x^\star,y^\star)$ is a saddle point of the Lagrangian function
\begin{equation}
\mathcal{L}\colon (x,y)\mapsto f(x) + g(x) +\Pair{Mx}{y} - h^*(y).
\end{equation}
A particular case of Problem~\ref{p1} is when $h = \iota_{\KK}(\cdot - c)$ with $c\in\GG$, 
is the indicator function of  a non-empty closed convex subset $\KK\subset\GG$, i.e.,
\begin{equation}
\iota_{\KK}\colon\GG\to\RX\colon y\mapsto
\begin{cases}
0,&\text{if}\;y\in\KK,\\
+\infty,&\text{otherwise}.
\end{cases}
\end{equation}
In this case, Problem~\ref{p1} reduces to the following constrained minimization problem
\begin{equation}
\label{eq_constraint_nonsmooth}
\min_{x\in\HH}\big\{f(x)+g(x)\;:\; Mx - c\in\KK\big\},
\end{equation}
and furthermore, if $g = \iota_{\XX}$ for some $\XX\subset\HH$,
\begin{equation}
\min_{x\in\HH}\big\{f(x)\;:\; x\in\XX \;\text{such that}\; Mx - c\in\KK\big\}.
\end{equation}

A traditional approach for smooth minimization problems is the gradient descent algorithm together with its accelerated version. 
This idea has already been adapted for nonsmooth composite minimization problem by linearizing the smooth term before minimizing. 
For instance, if $h =0$, then Problem~\ref{p1} can be solved by FISTA (in other words, 
an accelerated forward-backward algorithm) \cite{BT09,CV13b}, 
and this approach can be generalized to the case where $h$ is with Lipschitz gradient. 
If furthermore, $h = \iota_{\{c\}}$ for some $c\in\GG$, then various of alternating direction optimization methods (ADMM) 
\cite{ADMM} can be used. A linearization technique is recently combined with ADMM in \cite{xu2016} to tackle such cases. 
However, in the general case, we need a special treatment of $h(Mx)$. 
For instance, we may compute approximations to the proximal operator of $(x \mapsto g(x) + h(Mx))$
as in~\cite{BT09tv}. We obtain an algorithm with a nested loop for this proximal operator computation. 
Provided we are able to control theaccuracy of the inner loop, we can obtain convergence rates.

Another possibility is to consider primal-dual splitting. 
By interpreting the optimization problem~\ref{p1} as a saddle point problem, 
we can derive methods updating primal and dual variables at each iteration, without
any nested loop. A primal-dual method able to deal with our composite framework was given in~\cite{condat13,Vu13}. 

A powerful smoothing framework was first introduced in \cite{Nes05a}, which can also be applied to solve Problem~\ref{p1}. 
The main idea isto consider a smoothed approximation to the nonsmooth function $h$
and minimize the resulting problem using an accelerated forward-backward algorithm.
This approach has been improved (for the case $f=0$) in \cite{VolkanQuocOlivier} as follows. 
Instead of considering a fixed smoothed approximation to the nonsmooth function $h$, 
the authors set up a homotopy strategy by considering
a decreasing smoothing parameter. In doing so, they obtain improved convergence characterizations, 
and,more importantly, they prove finite-time convergence rates in terms of
function value and infeasibility. Indeed, these rates are difficult to 
obtain in the constrained case when approximately solving the proximity operator
or considering classical primal-dual algorithms.

In this paper, we build on this latter, homotopy-based smoothing technique to 
tackle the more general composite framework, i.e., Problem~\ref{p1}. 
In this scenario, to apply the technique of \cite{Nes05a} as in \cite{VolkanQuocOlivier},
it would require the computation of the proximity operator of $f+g$ which is generally not easy 
even the case where one knows how to compute the proximity operators of $f$ and $g$ separately. 
One of our goals is to avoid this computational difficulty by using the smoothness.  
The second non-smooth part is then smoothed using the idea of \cite{Nes05a}. 
To see this, let us rewrite the objective function of \eqref{prob1} as follows
\begin{equation}
F(x) := f(x)+g(x)+h(Mx) = f(x)+g(x)+\max_{y\in\GG}\;\Pair{Mx}{y} - h^*(y),
\end{equation}
Instead of minimizing $F$, we first smooth one of its nonsmooth parts, says $h$, 
controlled by a smoothness parameter $\beta\in\RPP$ and then minimize
\begin{equation}
F_{\beta}(x) := f(x)+g(x)+\max_{y\in\GG}\;\big\{\Pair{Mx}{y} - h^*(y) -\beta q(y)\big\},
\end{equation}
with a suitable strongly convex function $q\colon\GG\to\RX$. 
We then use the accelerated forward-backward scheme 
to design algorithms that maintain the decrease of the approximated objective function in the sense that
\begin{equation}
(\forall k\in\NN)\quad F_{\beta_{k+1}}(\bar x^{k+1}) - F^\star \leq (1-\tau_k)\big(F_{\beta_k}(\bar x^k) - F^\star\big)+ \psi_k,
\end{equation}
where $(\bar x^k)_{k\in\NN}$ and the parameters are generated by the algorithm with 
$(\tau_k)_{k\in\NN}\subset [0, 1)^{\NN}$ and $(\max(\psi_k, 0))_{k\in\NN}$ tends to zero. 
We will also simultaneously update the $\beta_{k+1}$ parameter to zero to achieve an $\mathcal{O}(1/k)$ convergence rate. 
Our approach allows us to consider features that were introduced initially 
for unconstrained optimization like line-search or the balance of computational power between 
the steps of the algorithm.

The rest of the paper is organized as follow. In Section~\ref{sct:prem} we revise some technical facts. 
The main result is presented in Section~\ref{sct:main}. Numerical evidence is placed in Section~\ref{sct:num}. 

{\bf Notation.}
The Hilbert spaces $\HH$ and $\GG$ are equipped with their respective norms and inner products 
that we will both denote by $\|\cdot\|$ and $\Pair{\cdot}{\cdot}$ respectively. 
A positive definite linear operator $S$ on $\GG$, i.e., $\exists\sigma\in\RPP$ 
such that $(\forall y\in\GG)$ $\Pair{y}{Sy}\geq\sigma\|y\|^2$, induces a norm
$(\forall y\in\GG$ $\|y\|_S=\sqrt{\Pair{y}{Sy}}$. 
Given a proper, closed, lower semi-continuous convex function $f\colon\HH\to\RX$, 
we denote by $\inte\dom{f}$ the interior of $\dom{f}$ and by
\begin{equation}
\label{sub}
\partial{f}\colon x\mapsto\menge{v\in\HH}{(\forall y\in\HH)\; f(x)+\Pair{y-x}{v}\leq f(y)}
\end{equation} 
the subdifferential of $f$. If $f$ is differentiable, then we use $\nabla{f}$ for its gradient 
and in this case we say that $f$ has $L_f$-Lipschitz gradient with respect to norm $\|\cdot\|_S$ if 
\begin{equation}
(\forall x\in\HH)(\forall y\in\HH)\quad f(x)\leq f(y) +\Pair{x-y}{\nabla f(y)}+\frac{L_f}{2}\|x-y\|^2_S.
\end{equation}
Finally, we say that $f$ is $\mu$-strongly convex on $\HH$ with respect to $\|\cdot\|_S$ if
\begin{equation}
(\forall x\in\HH)(\forall y\in\HH)(\forall v\in\partial f(y))\quad f(x)\geq f(y)+\Pair{x-y}{v}+\frac{\mu}{2}\|x-y\|_S^2.
\end{equation}
Without indicating the norm, we are assuming that Lipschitz continuity or strong convexity is with 
a Hilbertian norm.

\section{Preliminaries}
\label{sct:prem}
In this section we revise some basic facts about the proximity operators and functions, and furthermore, 
the smoothing technique for non-smooth functions using the Fenchel-Moreau conjugate. 
In the optimization, the following notion of the proximity operators is widely used.
\begin{definition}
{\rm\cite[Definition~12.23]{BC11_B}}
Let $g\colon\HH\to\RX$ be a proper closed lower-semicontinuous convex function.  
The proximity operator of $g$ is
\begin{equation}
\prox_{g}\colon\HH\to\HH\colon x\mapsto\argmind{z\in\HH}{g(z)+\frac{1}{2}\|z-x\|^2}.
\end{equation}
\end{definition}

\begin{lemma}
{\rm\cite[Proposition~12.26]{BC11_B}}
\label{s:le2}
Let $g\colon\HH\to\RX$ be a proper closed lower semi-continuous convex function, 
let $\gamma\in\RPP$, let $x\in\HH$ and let $p=\prox_{\gamma g}(x)$. Then, it holds that
\begin{equation}
(\forall z\in\HH)\quad\gamma^{-1}\Pair{z-p}{x-p} + g(p)\leq g(z).
\end{equation}
\end{lemma}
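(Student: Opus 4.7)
The statement is the standard subgradient characterization of the proximity operator, so the plan is to derive it directly from the defining optimization problem via Fermat's rule.

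The plan is as follows. First, I would recall that by the definition of the proximity operator, $p = \prox_{\gamma g}(x)$ is the unique minimizer over $\HH$ of the strongly convex function $z \mapsto \gamma g(z) + \tfrac{1}{2}\|z-x\|^2$. Applying Fermat's rule for this unconstrained convex minimization problem, the first-order optimality condition reads $0 \in \partial\bigl(\gamma g(\cdot) + \tfrac{1}{2}\|\cdot - x\|^2\bigr)(p)$. Since the squared-norm term is everywhere differentiable, the sum rule for subdifferentials applies trivially and gives $0 \in \gamma\,\partial g(p) + (p - x)$, i.e., $\gamma^{-1}(x-p) \in \partial g(p)$.

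Second, I would invoke the definition of the subdifferential given in~\eqref{sub}: for any $v \in \partial g(p)$ and any $z\in\HH$,
\begin{equation*}
g(p) + \Pair{z-p}{v} \leq g(z).
\end{equation*}
Specializing to $v = \gamma^{-1}(x-p)$ yields exactly the claimed inequality.

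There is no real obstacle here: the only subtle point is justifying that the subdifferential of the sum equals the sum of the subdifferentials, which is immediate because $\tfrac{1}{2}\|\cdot - x\|^2$ is finite and continuous on all of $\HH$ (so its domain has nonempty interior meeting $\dom g$, and the Moreau–Rockafellar sum rule applies). Everything else is a one-line application of definitions.
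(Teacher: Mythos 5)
Your argument is correct and is exactly the standard proof of this characterization: Fermat's rule plus the Moreau--Rockafellar sum rule give $\gamma^{-1}(x-p)\in\partial g(p)$, and the subdifferential inequality \eqref{sub} then yields the claim. The paper itself supplies no proof, deferring entirely to \cite[Proposition~12.26]{BC11_B}, and your derivation coincides with the argument given there.
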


The following smoothing technique using the Fenchel-Moreau conjugate and the proximity functions is from \cite{Nes05a}.

\begin{definition}
Let $h\colon\GG\to\RX$ be a convex function, let $\beta\in\RPP$, 
let $S$ be a positive definite linear operator on
$\GG$ and let $\dot y\in\GG$. The $\beta$-smooth approximation of $h$ is
\begin{equation}
\label{s:eq2}
h_{\beta}(\cdot; \dot y)\colon \GG\to\RX\colon y\mapsto\max\limits_{\bar y\in\GG}\big\{\Pair{y}{\bar y} - h^*(\bar y) - \frac{\beta}{2} \norm{\bar y - \dot y}^2_S \big\}.
\end{equation}
Set 
\begin{equation}
(\forall y\in\GG)\quad y_{\beta}^*(y;\dot y) =\argmax{{\bar y\in\GG}}{\Pair{y}{\bar y} - h^*(\bar y) -\frac{\beta}{2} \norm{\bar y-\dot y}^2_S }.
\end{equation} 
For instance, if $S = I$ then 
\begin{equation}
(\forall y\in\GG)\quad y_{\beta}^*(y;\dot y) = \prox_{\beta^{-1} h^*}\big(\beta^{-1}y + \dot y\big).
\end{equation}
\end{definition}

We summarize important properties of the smooth approximation in the following lemma 
which will be a crucial key in the analysis of our algorithm.
\begin{lemma}
\label{lem:move_beta}
Let $h\colon\GG\to\RX$ be convex, let $S$ be a positive definite linear operator on $\GG$ 
and let $\dot y\in\GG$. Consider the smooth approximations of $h$
\begin{equation}
(\forall \beta\in\RPP)\quad 
h_{\beta}(\cdot; \dot y)\colon \GG\to\RX\colon y\mapsto\max\limits_{\bar y\in\GG}\big\{\Pair{y}{\bar y} - h^*(\bar y) -\frac{\beta}{2} \norm{\bar y-\dot y}^2_S\big\}.
\end{equation}
Then the following hold:
\begin{enumerate}
\item\label{l1i}
Denote $\tilde h\colon (\beta, y)\mapsto h_{\beta}(y;\dot y)$. Then we have the following:
\begin{enumerate}
\item\label{lem:move_betai} 
$\tilde h$ is differentiable with respect to both variables and
\begin{multline}
(\forall y\in\GG)(\forall \beta\in\RPP)\quad\dfrac{\partial \tilde h}{\partial \beta}(\beta,y) = - \frac{1}{2}\norm{y_\beta^*(y;\dot y)-\dot y}^2_S 
 =  - \frac{1}{2}\norm{\nabla h_{\beta}(y;\dot y) - \dot y}^2_S.
\end{multline}
\item\label{lem:move_betaii} 
$\tilde h$ is convex with respect to first variable and 
\begin{equation}
\label{eq:convexity_h_beta}
\begin{aligned}
(\forall y\in\GG)(\forall \bar\beta\leq\tilde\beta)\quad
\tilde h(\bar\beta,y) &\leq \tilde h(\tilde\beta,y) - (\tilde\beta - \bar\beta) \frac{\partial\tilde h}{\partial \beta}(\bar\beta, y)\\
 &= \tilde h(\tilde\beta,y)  + \frac{\tilde\beta - \bar\beta}{2} \norm{\nabla h_{\bar\beta}(y;\dot y) -\dot y}^2_S.
 \end{aligned}
\end{equation}
\end{enumerate}
\item\label{lem:move_betaiii} 
Let $\beta\in\RPP$. Then the function $y\mapsto h_\beta(y;\dot y)$ is well-defined on $\GG$. It is convex 
with $\frac{1}{\beta}$-Lipschitz gradient in the norm $\norm{\cdot}_{S^{-1}}$ and furthermore,
\begin{equation}
\label{eq:cocoercivity_nabla_h_beta}
(\forall (\bar y, \hat y)\in\GG^2)\quad h_\beta(\hat y;\dot y) + \Pair{\bar y - \hat y}{\nabla h_\beta(\hat y;\dot y)} 
\leq h_\beta (\bar y;\dot y) - \frac{\beta}{2} \norm{\nabla h_\beta(\hat y;\dot y) - \nabla h_\beta(\bar y;\dot y)}^2_S.
\end{equation}
\item\label{lem:move_betaiv} 
The following inequality holds:
\begin{equation}
\label{eq:magic_return_to_zero}
(\forall (y, \hat y)\in\GG^2)(\forall\beta\in\RPP)\quad h_\beta(\hat y;\dot y) + \Pair{y - \hat y}{\nabla h_\beta(\hat y;\dot y)} 
\leq h(y) - \frac{\beta}{2} \norm{\nabla h_\beta(\hat y;\dot y) - \dot y}^2_S.
\end{equation}
\item\label{lem:move_betav} 
For all $(\beta,\tau)\in\RPP^2$ and for all $(\bar y, \hat y)\in\GG^2$, one has
\begin{multline}
\label{eq:combine_tau}
0\leq \norm{ (1- \tau) (\nabla h_\beta(\hat y;\dot y) - \nabla h_\beta(\bar y;\dot y)+ \tau (\nabla h_\beta(\hat y;\dot y) - \dot y)}^2_S  \\
= (1- \tau) \norm{ \nabla h_\beta(\hat y;\dot y) - \nabla h_\beta(\bar y;\dot y) }^2_S
+ \tau \norm{ \nabla h_\beta(\hat y;\dot y) - \dot y }^2_S - \tau (1-\tau) \norm{\nabla h_\beta(\bar y;\dot y) - \dot y}^2_S.
\end{multline}
\end{enumerate}
\end{lemma}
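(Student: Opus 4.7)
My plan is to handle the five items in order, all flowing from the observation that $h_\beta(y;\dot y)$ is precisely the Fenchel conjugate of the strongly convex function $\phi_{\beta,\dot y} := h^* + \tfrac{\beta}{2}\|\cdot - \dot y\|_S^2$ evaluated at $y$. Since $\phi_{\beta,\dot y}$ is $\beta$-strongly convex with respect to $\|\cdot\|_S$ for every $\beta\in\RPP$, the maximizer $y_\beta^*(y;\dot y)$ is unique and well-defined, so item~\ref{lem:move_betaiii} begins with ``$h_\beta(\cdot;\dot y)$ is well-defined''; the Lipschitz-gradient property in $\|\cdot\|_{S^{-1}}$ is then the classical duality fact that the conjugate of a $\beta$-strongly convex function (with respect to $\|\cdot\|_S$) has $\beta^{-1}$-Lipschitz gradient in the dual norm. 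The cocoercivity inequality~\eqref{eq:cocoercivity_nabla_h_beta} is Baillon--Haddad applied to this convex function with $L=\beta^{-1}$.

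For~\ref{lem:move_betai}, I would invoke Danskin's theorem / the envelope theorem for strongly concave inner problems: since $\bar y\mapsto \langle y,\bar y\rangle - h^*(\bar y) - \tfrac{\beta}{2}\|\bar y - \dot y\|_S^2$ is strongly concave with unique maximizer $y_\beta^*(y;\dot y)$, the value function $\tilde h(\beta,y)$ is differentiable in $(\beta,y)$ and its partials are obtained by freezing $\bar y$ at the maximizer. This immediately gives $\partial_\beta \tilde h(\beta,y) = -\tfrac{1}{2}\|y_\beta^*(y;\dot y)-\dot y\|_S^2$, and the identity $\nabla h_\beta(y;\dot y) = y_\beta^*(y;\dot y)$ (also from Danskin) lets me rewrite this in gradient form. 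For~\ref{lem:move_betaii}, $\tilde h(\cdot,y)$ is a pointwise maximum over $\bar y$ of functions affine (and decreasing) in $\beta$, so it is convex and nonincreasing; the desired bound is the standard subgradient inequality $\phi(\tilde\beta)\geq \phi(\bar\beta) + \phi'(\bar\beta)(\tilde\beta-\bar\beta)$ applied to $\phi(\cdot) = \tilde h(\cdot,y)$ at the anchor $\bar\beta$, rearranged and expressed using the formula from~\ref{lem:move_betai}.

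For~\ref{lem:move_betaiv}, setting $\bar y^\star := y_\beta^*(\hat y;\dot y) = \nabla h_\beta(\hat y;\dot y)$, I expand the left-hand side using the definition~\eqref{s:eq2}:
\begin{equation*}
h_\beta(\hat y;\dot y) + \Pair{y-\hat y}{\bar y^\star}
= \Pair{y}{\bar y^\star} - h^*(\bar y^\star) - \tfrac{\beta}{2}\norm{\bar y^\star-\dot y}_S^2.
\end{equation*}
Fenchel--Young gives $\Pair{y}{\bar y^\star} - h^*(\bar y^\star)\leq h(y)$, which yields the claim. For~\ref{lem:move_betav}, this is purely algebraic: it is the standard convex-combination identity
\begin{equation*}
\norm{(1-\tau)u + \tau v}_S^2 = (1-\tau)\norm{u}_S^2 + \tau\norm{v}_S^2 - \tau(1-\tau)\norm{u-v}_S^2
\end{equation*}
applied with $u = \nabla h_\beta(\hat y;\dot y) - \nabla h_\beta(\bar y;\dot y)$ and $v = \nabla h_\beta(\hat y;\dot y) - \dot y$, noting that $u-v = \dot y - \nabla h_\beta(\bar y;\dot y)$.

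The only step that requires genuine care is~\ref{lem:move_betaiii}: I need to make the strong-convexity/Lipschitz-smoothness duality precise with respect to the non-Euclidean norm $\|\cdot\|_S$ and its dual $\|\cdot\|_{S^{-1}}$, rather than simply quoting the Hilbertian Baillon--Haddad theorem. I would handle this by writing the optimality condition for $\bar y^\star$ in the form $S^{-1}y/\beta + \dot y - \bar y^\star \in \beta^{-1}S^{-1}\partial h^*(\bar y^\star)$ and using the monotonicity of $\partial h^*$ in the inner product $\Pair{\cdot}{S\cdot}$ to derive the Lipschitz bound $\|\nabla h_\beta(\hat y;\dot y) - \nabla h_\beta(\bar y;\dot y)\|_S \leq \beta^{-1}\|\hat y - \bar y\|_{S^{-1}}$ directly; the cocoercivity inequality then follows by integrating along the segment, as in the standard descent-lemma proof. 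Everything else is bookkeeping.
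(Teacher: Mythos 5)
Your outline reproduces the paper's proof almost item for item: the envelope/Danskin argument for \ref{lem:move_betai}, convexity in $\beta$ as a supremum of affine functions plus the gradient inequality for \ref{lem:move_betaii}, expansion at the maximizer followed by Fenchel--Young for \ref{lem:move_betaiv}, the convex-combination identity for \ref{lem:move_betav}, and strong-convexity/smoothness duality (the content of \cite[Theorem~1]{Nes05a}, which the paper simply cites) for the well-posedness and $\beta^{-1}$-Lipschitz-gradient claims in \ref{lem:move_betaiii}. Your plan to re-derive the Lipschitz bound from the $S$-monotonicity of $\partial h^*$ is a sound, more self-contained replacement for that citation.

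The one step that does not work as described is your claim that the cocoercivity inequality \eqref{eq:cocoercivity_nabla_h_beta} ``follows by integrating along the segment, as in the standard descent-lemma proof.'' Integrating $t\mapsto\nabla h_\beta(\hat y+t(\bar y-\hat y);\dot y)$ along the segment yields only the quadratic upper bound $h_\beta(\bar y;\dot y)\leq h_\beta(\hat y;\dot y)+\Pair{\bar y-\hat y}{\nabla h_\beta(\hat y;\dot y)}+\frac{1}{2\beta}\norm{\bar y-\hat y}_{S^{-1}}^2$; it cannot produce the extra negative term $-\frac{\beta}{2}\norm{\nabla h_\beta(\hat y;\dot y)-\nabla h_\beta(\bar y;\dot y)}_S^2$, which is strictly stronger than convexity plus the descent lemma. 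The missing idea --- which the paper writes out ``for completeness'' --- is to apply the descent lemma not to $h_\beta(\cdot;\dot y)$ itself but to the auxiliary function $\phi(z)=h_\beta(z;\dot y)-\Pair{\nabla h_\beta(\hat y;\dot y)}{z}$, which is convex, minimized at $\hat y$, and has the same Lipschitz modulus; evaluating $\phi$ at the point obtained from $\bar y$ by one exact gradient step gives $\phi(\hat y)\leq\phi(\bar y)-\frac{\beta}{2}\norm{\nabla\phi(\bar y)}_S^2$, and the identity $\nabla\phi(\bar y)=\nabla h_\beta(\bar y;\dot y)-\nabla h_\beta(\hat y;\dot y)$ turns this into \eqref{eq:cocoercivity_nabla_h_beta}. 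Since you also (correctly) name the inequality as Baillon--Haddad, your conclusion is reachable, but the from-scratch argument you sketch needs this one-gradient-step trick rather than segment integration.
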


\begin{proof}
\ref{lem:move_betai} 
As there is a unique minimizer to the problem defining $h_\beta(\cdot;\dot y)$, the function
is differentiable with respect to $\beta$ and $y$.

\ref{lem:move_betaii} 
The function $\beta\mapsto h_\beta(y; \dot y)$ is convex as it is a maximum of functions, 
which are linear in $\beta$ indexed by $y$ and $\dot y$. 
The rest follows by convexity and the first point.

\ref{lem:move_betaiii} 
By the same arguments as in \cite[Theorem~1]{Nes05a} we deduce that the function 
$y\mapsto h_\beta(y;\dot y)$ is convex and finite on $\GG$. It also has $\frac{1}{\beta}$-Lipschitz gradient in the norm $\norm{\cdot}_{S^{-1}}$. 
\eqref{eq:cocoercivity_nabla_h_beta} is the cocoercivity inequality for convex functions with Lipschitz gradient. We provide the proof for completeness.
Define $\phi(z) = h_\beta(z; \dot y) - \langle \nabla h_\beta(\hat y; \dot y), z\rangle$. The function $\phi$ is convex, its minimum is attained at $\hat y$ and it has a $\frac{1}{\beta}$-Lipschitz gradient in the norm $\norm{\cdot}_{S^{-1}}$. Hence
\[
\phi(\hat y) \leq \phi(\bar y - \beta^{-1} S \nabla \phi(\bar y)) \leq \phi(\bar y) - \beta^{-1} \langle \nabla \phi(\bar y), S \nabla \phi(\bar y) \rangle+ \frac{\beta}{2} \norm{\beta^{-1} S \nabla \phi(\bar y)}^2_{S^{-1}}.
\]
We get the result because $\nabla \phi(\bar y) = \nabla h_\beta(\bar y; \dot y) -  \nabla h_\beta(\hat y; \dot y)$.

\ref{lem:move_betaiv} 
Let $(\bar y, \hat y)\in\GG^2$ and $\beta\in\RPP$ and set $y_{\beta}^* = y_{\beta}^*(\hat y;\dot y)= \nabla h_\beta(\hat y;\dot y)$. 
We have
\begin{equation}
\begin{aligned}
h_\beta(\hat y;\dot y) + \Pair{ y - \hat y}{\nabla h_\beta(\hat y;\dot y)}
&= \Pair{\hat y}{ y^*_\beta} - h^*(y^*_\beta) - \frac{\beta}{2} \norm{y^*_\beta-\dot y}^2_S + \Pair{y - \hat y}{y^*_\beta }\\
& = \Pair{y}{y^*_\beta} - h^*(y^*_\beta) - \frac{\beta}{2}\norm{y^*_\beta-\dot y}^2_S\\
& \leq \max_{\bar y \in \GG}\bigg\{ \Pair{y}{\bar y} - h^*(\bar y)\bigg\}- \frac{\beta}{2}\norm{y^*_\beta-\dot y}^2_S\\
& \leq h(y) - \frac{\beta}{2} \norm{\nabla h_\beta(\hat y;\dot y) -\dot y}^2_S.
\end{aligned}
\end{equation}

\ref{lem:move_betav} This follows from the classical equality
\begin{equation}
\norm{(1-\tau) a + \tau b}^2_S = (1-\tau) \norm{a}^2_S + \tau \norm{b}^2_S - \tau (1-\tau) \norm{a-b}^2_S.
\end{equation}
\end{proof}

\section{Main results}
\label{sct:main}

\subsection{Presentation of the algorithms}

In this section, we design new algorithms to solve Problem~\ref{p1} based on the smoothing technique introduced in the previous 
section. Consider the setting of Problem~\ref{p1}. We fix a positive definite linear operator $S$ on $\GG$ and a point $\dot y\in\GG$. 
We will need the operator norm of $M$ defined as
\[
\|M\|_{S^{-1}} = \sup_{x \not = 0} \frac{\norm{Mx}_{S^{-1}}}{\norm{x}}
\]

Our first algorithm is given as Algorithm~\ref{eq2}. 

\begin{algorithm} 
\begin{algorithmic}[1]
\STATE{Inputs: $\tau_0 = 1$, $\beta_0 > 0$, $\bar x^0\in\HH$, $\tilde x^0\in\HH$.}
\FOR{$k = 0, 1, \ldots$}
\STATE{$\hat x^k = (1-\tau_k) \bar x^k + \tau_k \tilde x^k$}
\STATE{$\beta_{k+1} = \frac{\beta_k}{1+\tau_k}$ and $B_{k+1}=L_f + \frac{\|M\|_{S^{-1}}^2}{\beta_{k+1}}$}
\STATE{$v^k =  M^*y_{\beta_{k+1}}^*(M\hat x^k;\dot y)$}\label{v_k}
\STATE{$\tilde x^{k+1} = \prox_{\frac{1}{\tau_kB_{k+1}}g}\big(\tilde x^k - \frac{1}{\tau_kB_{k+1}}(\nabla f(\hat x^k) + v^k)\big)$}\label{aug}
 \STATE{$\bar x^{k+1} = \hat x^k + \tau_k(  \tilde x^{k+1} - \tilde x^k) = (1-\tau_k) \bar x^k + \tau_k \tilde x^{k+1}$}
 \STATE{Find the unique positive $\tau_{k+1}$ such that}\label{tauk}
 \[
 \frac{B_{k+1} - L_f}{B_{k+1}}\tau_{k+1}^3 + \tau_{k+1}^2 +  \tau_{k}^2\tau_{k+1} -  \tau_{k}^2 = 0
 \]
\ENDFOR
\RETURN $\bar x^{k+1}$
\end{algorithmic}
\caption{Linearized ASGARD}
\label{eq2}
\end{algorithm}

\begin{remark}
Let us consider problem \eqref{eq_constraint_nonsmooth} with $\KK=\{0\}$. Recent paper \cite{xu2016} proposed a combination of linearizing technique and alternating direction of multipliers methods to solve \eqref{eq_constraint_nonsmooth} in which they obtained the $\mathcal{O}(1/k)$-rate (ergodic) convergence for fixed parameters and $\mathcal{O}(1/k^2)$ with adaptive parameters. Recall that in this case $h=\iota_{\{c\}}$ and hence $y^*_{\beta_{k+1}}(M\hat x^k)$ in the step~\ref{v_k} of Algorithm~\ref{eq2} (with $\dot y =0$ and $S=\text{I}$) becomes
\begin{equation}
y^*_{\beta_{k+1}}(M\hat x^k) = \prox_{\beta_{k+1}^{-1}h^*}(\beta_{k+1}^{-1}M\hat x^k) = \beta_{k+1}^{-1}\big(M\hat x^k - \prox_{\beta_{k+1}h}(M\hat x^k)\big)=\beta_{k+1}^{-1}\big(M\hat x^k - c\big),
\end{equation}
where we used Moreau's decomposition \cite[Theorem~14.3]{BC11_B}. Hence, for problem \eqref{eq_constraint_nonsmooth} with $\KK=\{0\}$, our Algorithm~\ref{eq2} is non-augmented Lagrangian version of \cite[Algorithm~1]{xu2016} where in Step~\ref{aug}, instead of using linearized augmented Lagrangian as \cite{xu2016}, we only used linearization of $f+g$ and hence this step only requires the computation of proximity of $g$. 
We also note that the update rule for parameters of \cite{xu2016} is different from ours.
\end{remark}

For some problems, for instance when $f$ encodes some data-fitting term, computing $\nabla f$ requires much more computational
power than $\prox_{h^*}$ or $\prox_g$. To circumvent this issue and concentrate 
on the non-smoothness of the objective, we propose
Algorithm~\ref{eq2b}, a variant of the standard ASGARD in which we use old gradients.
\begin{algorithm}[hptb]
\begin{algorithmic}[1]
\STATE{Inputs: $\tau_0 = 1$, $\beta_0 > 0$, $\bar x^0\in\HH$, $\tilde x^0\in\HH$, $\hat{\hat x}^0 \in \HH$, $\delta > 0$ and $\sigma > 0$.}
\FOR{$k = 0, 1, \ldots$}
\STATE{$\hat x^k = (1-\tau_k) \bar x^k + \tau_k \tilde x^k$}
\STATE{$\beta_{k+1} = \frac{\beta_k}{1+\tau_k}$ and $B_{k+1}=L_f + \frac{\|M\|_{S^{-1}}^2}{\beta_{k+1}}$}\label{Bb}
\STATE{$v^k =  M^*y_{\beta_{k+1}}^*(M\hat x^k;\dot y)$}
\STATE{$\tilde x^{k+1} = \prox_{\frac{1}{\tau_kB_{k+1}}g}\big(\tilde x^k - \frac{1}{\tau_kB_{k+1}}(\nabla f(\hat{\hat x}^k) + v^k)\big)$} \label{prox-step}
 \STATE{$\bar x^{k+1} = \hat x^k + \tau_k(  \tilde x^{k+1} - \tilde x^k) = (1-\tau_k) \bar x^k + \tau_k \tilde x^{k+1}$}
 \IF{$\frac 12\norm{\bar x^{k+1} - \hat{\hat x}^k}^2  - \frac 12\norm{\bar x^{k+1} - \hat x^k}^2
 \leq \sigma \big(\frac{\tau_k^2 B_{k+1}}{L_f} \big)^{2+\delta}$}
  \STATE $\hat{\hat x}^k = \hat{\hat x}^{k-1}$
 \ELSE
  \STATE $\hat{\hat x}^k = \hat x^k$
  \STATE Goto \ref{prox-step} and recompute $(\bar x^{k+1}, \tilde x^{k+1})$ with the true gradient.
 \ENDIF
 \STATE{Find the unique positive $\tau_{k+1}$ such that}
 \[
 \frac{B_{k+1} - L_f}{B_{k+1}}\tau_{k+1}^3 + \tau_{k+1}^2 +  \tau_{k}^2\tau_{k+1} -  \tau_{k}^2 = 0
 \]
\ENDFOR
\RETURN $\bar x^{k+1}$
\end{algorithmic}
\caption{Linearized ASGARD with old gradients}
\label{eq2b}
\end{algorithm}

\begin{remark}
Let us consider the case when $\hat{\hat x}^{k+1} = \hat x^k$. We have
\begin{align*}
&\frac 12\norm{\bar x^{k+2} - \hat{\hat x}^{k+1}}^2  - \frac 12\norm{\bar x^{k+2} - \hat x^{k+1}}^2
 = \langle \bar x^{k+2} - \frac{\hat{\hat x}^{k+1} + \hat x^{k+1}}{2}, \hat x^{k+1} - \hat{\hat{x}}^{k+1} \rangle \\
 & \qquad \qquad= \langle \tau_{k+1}(\tilde x^{k+2} - \tilde x^{k+1}) - \frac{\hat{\hat x}^{k+1} - \hat x^{k+1}}{2}, \hat x^{k+1} - \hat{\hat{x}}^{k+1} \rangle \\
& \hat x^{k+1} - \hat{\hat{x}}^{k+1} = \hat x^{k+1} - \hat x^{k}
 = \tau_{k+1} (\tilde x^{k+1} - \hat x^k) + \tau_k(1-\tau_{k+1})(\tilde x^{k+1} - \tilde x^k)
\end{align*}
and so assuming boundedness of the iterates, 
$\frac 12\norm{\bar x^{k+2} - \hat{\hat x}^{k+1}}^2  - \frac 12\norm{\bar x^{k+2} - \hat x^{k+1}}^2 $ is at most of the order of $\tau_k^2 \in \mathcal{O}(1/k^2)$. 
It is thus likely that it may sometimes be smaller than $\sigma \big(\frac{\tau_k^2 B_{k+1}}{L_f} \big)^{2+\delta}$ if $\delta$ is small enough.
\end{remark}

The last variant of our method is Algorithm~\ref{alg:asgardlinesearch}. It is equipped with a line search inspired by the line search 
for the accelerated universal gradient method~\cite{nesterov2015universal}.
It is particularly useful when the Lipschitz constant of $\nabla f$ is 
difficult to estimate. Moreover, it automatically adapts to the case when
$h$ is smooth by preventing the current estimate of the Lipschitz constant $B_{k+1}$
to increase to infinity. Note that because of the line search test on Step~\ref{step:linesearchtest}, the use of old gradients is not compatible with
this line search. Unlike the line search of Malitsky et al \cite{MP11}, the goal here is not only to
adaptively estimate $\|M\|^2$ but also the whole $L_f +\|M\|^2/\beta$. Our line search is more computationally demanding but may
take profit of some local smoothness of the nonsmooth function h.

\begin{algorithm}[hptb]
\begin{algorithmic}[1]
\STATE{Inputs: $\tau_0 = 1$, $\beta_0 > 0$, $\bar x^0\in\HH$, $\tilde x^0\in\HH$, $B_0 \leq a (L_f + \frac{\|M\|_{S^{-1}}^2 }{\beta_{0}})$, $a > 1$.}
\FOR{$k = 0,1, \ldots$}
\STATE{$B_{k+1} = a^{-1} B_{k}$}
\REPEAT
\STATE{$B_{k+1} = a B_{k+1}$}
\STATE{Find the unique positive $\tau_k$ such that $\frac{1 - \tau_k}{\tau_k^2 B_{k+1}} = \frac{1}{\tau_{k-1}^2 B_{k}}$  \quad (except $\tau_0=1$)} \label{step:tau}
\STATE{$\hat x^k = (1-\tau_k) \bar x^k + \tau_k \tilde x^k$}
\STATE{$\beta_{k+1} = \frac{\beta_k}{1 + \tau_k}$}
\STATE{$v^k =  M^*y_{\beta_{k+1}}^*(M\hat x^k;\dot y)$}
\STATE{$\tilde x^{k+1} = \prox_{\frac{1}{\tau_kB_{k+1}}g}\big(\tilde x^k - \frac{1}{\tau_kB_{k+1}}(\nabla f(\hat x^k) + v^k)\big)$}
 \STATE{$\bar x^{k+1} = \hat x^k + \tau_k(  \tilde x^{+} - \tilde x^k) = (1-\tau_k) \bar x^k + \tau_k \tilde x^{k+1}$}
\UNTIL{$f(\bar x^{k+1}) + h_{\beta_{k+1}}(M \bar x^{k+1}; \dot y) \leq f(\hat x^k) + h_{\beta_{k+1}}(M\hat x^k, \dot y) + \langle \nabla f(\hat x^k) + v^k, \bar x^{k+1} - \hat x^k\rangle + \frac{B_{k+1}}{2}\norm{\bar x^{k+1} - \hat x^k}^2$}
\label{step:linesearchtest}
\ENDFOR
\RETURN $\bar x^{k+1}$
\end{algorithmic}
\caption{Linearized ASGARD with line search}
\label{alg:asgardlinesearch}
\end{algorithm}

\subsection{Convergence of the parameters to 0}

To analyze the convergence of Algorithm~\ref{eq2}, we need the following result.

\begin{lemma} \label{lem:choose_parameters_asgard}
Let $(\tau_k)_{k\in\NN}$, $(\beta_k)_{k\in\NN}$ and $(B_{k+1})_{k\in\NN}$ 
be the positive sequences determined by Algorithm~\ref{eq2}. Then the following hold:
\begin{equation}
(\forall k\in\NN\backslash\{0\})\quad 
\frac{1-\tau_k}{\tau_k^2B_{k+1}} = \frac{1}{\tau_{k-1}^2B_k}.
\end{equation}
Furthermore,
\begin{equation}
\label{eq100a}
(\forall k\in\NN)\quad \frac{1}{k+1} \leq \tau_k \leq \frac{2}{k+2}, 
\quad \beta_k \leq \frac{\beta_0}{k+1}
\quad\text{and}\quad\tau_k^2B_{k+1}\leq\frac{\tau_0^2B_1}{k+1}.
\end{equation}
\end{lemma}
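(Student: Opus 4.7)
The plan is to derive all four estimates from the cubic defining $\tau_{k+1}$ by first converting it into the cleaner recurrence $\frac{1-\tau_k}{\tau_k^2 B_{k+1}} = \frac{1}{\tau_{k-1}^2 B_k}$, and then squeezing $\tau_k$ between FISTA-type upper and lower estimates using, respectively, the monotonicity and the controlled growth of $(B_k)_{k\in\NN}$. Once $\tau_k$ is pinned down, the remaining estimates on $\beta_k$ and $\tau_k^2 B_{k+1}$ follow by telescoping.

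To derive the recurrence, I would multiply the defining cubic by $B_{k+1}$. Because $\beta_{k+2} = \beta_{k+1}/(1+\tau_{k+1})$ gives $B_{k+2}-L_f = (1+\tau_{k+1})(B_{k+1}-L_f)$, one obtains the algebraic identity $(B_{k+1}-L_f)\tau_{k+1}^3 + B_{k+1}\tau_{k+1}^2 = B_{k+2}\tau_{k+1}^2$; substituting into the cubic and rearranging yields $B_{k+2}\tau_{k+1}^2 = (1-\tau_{k+1})\tau_k^2 B_{k+1}$, which is the desired identity after an index shift. Inspecting the cubic at $\theta=0$ and at $\theta=1$ shows that it has a unique positive root $\tau_{k+1}\in (0,1)$ (strict monotonicity of the cubic for $\theta>0$ is immediate from the derivative); evaluating the cubic further at $\theta=\tau_k$ gives $(a+1)\tau_k^3\geq 0$, so the root also satisfies $\tau_{k+1}\leq\tau_k$.

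For the upper bound on $\tau_k$, I use $\beta_{k+1}\leq\beta_k$ to get $B_{k+1}\geq B_k$; inserting this into the recurrence yields $\frac{1}{\tau_k^2}-\frac{1}{\tau_{k-1}^2}\geq\frac{1}{\tau_k}$. Factoring the left-hand side and using the monotonicity $\tau_k\leq\tau_{k-1}$ established above, this sharpens to $\frac{1}{\tau_k}-\frac{1}{\tau_{k-1}}\geq\frac{1}{2}$; telescoping from $\tau_0=1$ gives $\tau_k\leq\frac{2}{k+2}$. For the lower bound, I use the opposite estimate $B_{k+1}\leq (1+\tau_k)B_k$, which upon insertion into the recurrence produces $\tau_k^3+\tau_k^2+\tau_{k-1}^2\tau_k-\tau_{k-1}^2\geq 0$. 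Setting $\sigma_k=1/\tau_k$, this amounts to $p(\sigma_k)\leq 0$ where $p(\sigma):=\sigma^3-\sigma^2-\sigma_{k-1}^2\sigma-\sigma_{k-1}^2$. A short calculation gives $p(\sigma_{k-1}+1)=\sigma_{k-1}\geq 0$, and since $p$ is strictly increasing past its unique positive root (as $p'(\sigma_{k-1}+1)=2\sigma_{k-1}^2+4\sigma_{k-1}+1>0$), I conclude $\sigma_k\leq\sigma_{k-1}+1$, whence $\tau_k\geq\frac{1}{k+1}$ by telescoping from $\sigma_0=1$.

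Finally, the estimates on $\beta_k$ and $\tau_k^2 B_{k+1}$ follow by iteration. From $\beta_{k+1}=\beta_k/(1+\tau_k)$ and $\tau_i\geq\frac{1}{i+1}$, one gets $\beta_k/\beta_0 = \prod_{i=0}^{k-1}(1+\tau_i)^{-1}\leq\prod_{i=0}^{k-1}\frac{i+1}{i+2}=\frac{1}{k+1}$. Rewriting the recurrence as $\tau_k^2 B_{k+1}=(1-\tau_k)\tau_{k-1}^2 B_k$ and iterating gives $\tau_k^2 B_{k+1}=\tau_0^2 B_1\prod_{i=1}^k(1-\tau_i)\leq\tau_0^2 B_1\prod_{i=1}^k\frac{i}{i+1}=\frac{\tau_0^2 B_1}{k+1}$. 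The main technical subtlety is the lower bound step: one must verify carefully that $p(\sigma_{k-1}+1)\geq 0$ combined with $p$ being strictly increasing past its unique positive root really does force $\sigma_k\leq\sigma_{k-1}+1$; the rest is routine manipulation of the recurrence.
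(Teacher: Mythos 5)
Your proof is correct, and its skeleton (derive the recurrence $\tfrac{1-\tau_k}{\tau_k^2B_{k+1}}=\tfrac{1}{\tau_{k-1}^2B_k}$ from the cubic via $B_{k+1}-L_f=\|M\|_{S^{-1}}^2/\beta_{k+1}$, bound $\tau_k$ from above and below, then telescope the products for $\beta_k$ and $\tau_k^2B_{k+1}$) matches the paper's. The two $\tau_k$ bounds are obtained by genuinely different micro-arguments, though. For the upper bound the paper compares the cubic $P(t)=\alpha t^3+t^2+\tau^2t-\tau^2$ with the quadratic $Q(t)=t^2+\tau^2t-\tau^2$, solves $Q$ explicitly, and runs an induction on $\tau_{k+1}\leq\bigl(-\tau_k^2+\sqrt{\tau_k^4+4\tau_k^2}\bigr)/2$; you instead feed $B_{k+1}\geq B_k$ into the recurrence and telescope $\tfrac{1}{\tau_k}-\tfrac{1}{\tau_{k-1}}\geq\tfrac12$, the classical FISTA estimate (this requires the monotonicity $\tau_k\leq\tau_{k-1}$, which you correctly extract by evaluating the cubic at $\tau_k$). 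For the lower bound both proofs ultimately rest on the same inequality $(1-\tau_k)\tau_{k-1}^2\leq\tau_k^2(1+\tau_k)$ (the paper gets it from $\alpha_k\leq1$ in the cubic, you from $B_{k+1}\leq(1+\tau_k)B_k$), but the paper concludes by contradiction inside an induction, whereas you substitute $\sigma_k=1/\tau_k$ and compare roots of the cubic $p(\sigma)=\sigma^3-\sigma^2-\sigma_{k-1}^2\sigma-\sigma_{k-1}^2$, using $p(\sigma_{k-1}+1)=\sigma_{k-1}>0$ to get $\sigma_k\leq\sigma_{k-1}+1$ directly. The one point to tighten is the "subtlety" you flag yourself: positivity of $p'$ at the single point $\sigma_{k-1}+1$ does not by itself give the implication; you should note that $p$ has exactly one positive root $\sigma^*$ (one sign change in its coefficients, or $p(0)<0$ with $p'$ having a single positive zero), so that $\{\sigma>0: p(\sigma)\leq0\}=(0,\sigma^*]$ and hence $p(\sigma_k)\leq0<p(\sigma_{k-1}+1)$ forces $\sigma_k\leq\sigma^*<\sigma_{k-1}+1$. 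With that remark supplied, your argument is complete and arguably more self-contained than the paper's contradiction step.
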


\begin{proof}
First it follows from Step~\ref{Bb} of Algorithms~\ref{eq2} and~\ref{eq2b} that
\begin{equation}
(\forall k\in\NN\backslash\{0\})\quad 
\frac{B_k+(B_k - L_f)\tau_k}{B_{k+1}} = \frac{L_f+ \frac{\|M\|_{S^{-1}}^2}{\beta_k} + \frac{\|M\|_{S^{-1}}^2}{\beta_k}\frac{(\beta_k-\beta_{k+1})}{\beta_{k+1}}}{L_f+ \frac{\|M\|_{S^{-1}}^2}{\beta_{k+1}}}
= \frac{L_f+ \frac{\|M\|_{S^{-1}}^2}{\beta_{k+1}}}{L_f+ \frac{\|M\|_{S^{-1}}^2}{\beta_{k+1}}} = 1,
\end{equation}
and Step~\ref{tauk} yields
\begin{equation}
(\forall k\in\NN\backslash\{0\})\quad 
\frac{1-\tau_k}{\tau_k^2B_{k+1}} = \frac{1}{\tau_{k-1}^2B_k}\frac{B_k + (B_k - L_f)\tau_k}{B_{k+1}} = \frac{1}{\tau_{k-1}^2B_k}.
\end{equation}
Next the definitions of $(\tau_k)_{k\in\NN}$ and $(\beta_k)_{k\in\NN}$ imply that
\begin{equation}
\label{eq1}
(\forall k\in\NN)\quad 
\alpha_{k+1}\tau_{k+1}^3 + \tau_{k+1}^2 +  \tau_{k}^2\tau_{k+1} -  \tau_{k}^2 = 0
\quad\text{and}\quad 
(1+\tau_{k})\beta_{k+1} =\beta_{k},
\end{equation}
where we define
\begin{equation}
(\forall  k\in\NN)\quad \alpha_{k+1} =  \frac{B_{k+1} - L_f}{B_{k+1}}\ \in\; [0, 1].
\end{equation}
For $\alpha \in \;[0, 1]$ and $\tau>0$, let us consider the following cubic function
\begin{equation}
(\forall t\in\RR_{+})\quad P(t)=\alpha t^3 + t^2 +  \tau^2 t -  \tau^2.
\end{equation}
On the one hand, since $(\forall t > 0)$ $P'(t) >0$ and $P(0) = -  \tau^2 < 0$ and $P(1) = \alpha + 1>0$,
we deduce that $P$ has a unique root $t_{+}\in \;]0, 1[$. On the other hand, 
let us define
\begin{equation}
(\forall t\in\RR_{+}) \quad Q(t) =  t^2 +  \tau^2t -  \tau^2.
\end{equation}
Then $(\forall t\in\RR_{+})$ $Q(t) \leq P(t)$, and in particular, $P(t^\star) \geq Q(t^\star) = 0 = P(t_{+})$, 
where $t^\star$ is the unique positive root of $Q$, that is $t^\star =(-\tau^2 + \sqrt{ \tau^4 + 4 \tau^2})/2$.  
As $P$ is nondecreasing on $\RPP$, we get $t_{+}\leq t^\star$. Consequently, since $\tau_0>0$, 
we deduce that $(\tau_k)_{k\in\NN}$ is well-defined and furthermore,
\begin{equation}
\label{eq101}
(\forall k\in\NN)\quad \tau_{k+1} \leq\frac{-\tau_{k}^2 + \sqrt{ \tau_{k}^4 + 4 \tau_{k}^2}}{2}.
\end{equation} 
This inequality and induction on $k\in\NN$ easily yields 
\begin{equation}
(\forall k\in\NN)\quad\tau_k \leq \frac{2}{k+2}.
\end{equation}
Now the two equalities in \eqref{eq1} imply that
\begin{equation}
\label{eq102}
(\forall k\in\NN)\quad
(\beta_k - \beta_{k+1} ) = \beta_{k+1} \tau_k\quad\text{and}\quad
\tau_{k}^2 = \tau_{k+1}^2\frac{1+\alpha_{k+1}\tau_{k+1}}{1-\tau_{k+1}} <  \tau_{k+1}^2\frac{1+\tau_{k+1}}{1-\tau_{k+1}}.
\end{equation}
We show by induction that
\begin{equation}
\label{siam:e1}
(\forall k\in\NN)\quad\tau_k \geq \frac{1}{k+1}.
\end{equation} 
Note that $\tau_0 = 1 \geq \frac{1}{0+1}$. Suppose that there exists $k_0\in\NN$ such that 
$\tau_{k_0} \geq \frac{1}{k_0+1}$ and that $\tau_{k_0+1} < \frac{1}{k_0+2}$. 
Then we deduce from \eqref{eq102} that
\begin{equation} 
\frac{1}{(k_0+1)^2} \leq \tau_{k_0}^2 < \tau_{k_0+1}^2 \frac{1 + \tau_{k_0+1}}{1-\tau_{k_0+1}}
< \frac{1}{(k_0+2)^2} \frac{1 + \frac{1}{k_0+2}}{1 - \frac{1}{k_0+2}} = \frac{1}{(k_0+2)^2} \frac{k_0 + 2}{k_0+1}
\end{equation}
which is equivalent to $(k_0+2)^2 < (k_0 +1)(k_0+2)$ which never happens. 
Hence, \eqref{siam:e1} holds true. We then deduce from induction that
\begin{equation}
(\forall k\in\NN)\quad\beta_{k+1} = \frac{\beta_k}{1 + \tau_k} \leq \beta_k \frac{k+1}{k+2}
\leq \beta_0 \prod_{l = 0}^k \frac{l+1}{l+ 2}=\dfrac{\beta_0}{k+2}.
\end{equation}
Of course $\beta_0 \leq \frac{\beta_0}{0+1}$ and hence,
\begin{equation}
(\forall k\in\NN)\quad\beta_k  \leq \dfrac{\beta_0}{k+1}.
\end{equation}
It again follows from induction and from \eqref{siam:e1} that
\begin{multline}
\label{eq205}
(\forall k\in\NN)\quad \tau_k^2B_{k+1}=(1-\tau_k)\tau_{k-1}B_k = \ldots = \prod_{l=1}^k(1-\tau_l)\tau_0^2B_1\\
\leq \prod_{l=1}^k\bigg(1-\frac{1}{l+1}\bigg)\tau_0^2B_1
= \tau_0^2B_1\prod_{l=1}^k\frac{l}{l+1} =\frac{B_1}{k+1}.
\end{multline}
\end{proof}

The convergence of Algorithm~\ref{alg:asgardlinesearch} is based on the following asymptotic property of parameters.

\begin{lemma} \label{lem:choose_parameters_asgard2}
Let $(\tau_k)_{k\in\NN}$, $(\beta_k)_{k\in\NN}$ and $(B_{k+1})_{k\in\NN}$ be the positive sequences 
determined by Algorithm~\ref{alg:asgardlinesearch}. Then $(\forall k\in\NN)$ $\tau_{k} \leq \frac{2}{k+2}$ and 
\begin{equation}
\begin{aligned}
(\forall k\in\NN)\quad\frac{\beta_0}{2B_1}\tau_k^2B_{k+1}&\leq\beta_{k+1} \leq \frac 12 \Big(3 a \frac{\beta_0}{B_1} \tau_k^2 L_f + \sqrt{(3 a \frac{\beta_0}{B_1} \tau_k^2 L_f)^2 + 12 a \frac{\beta_0}{B_1} \tau_k^2 \|M\|_{S^{-1}}^2} \Big) \\
&\sim \sqrt{3a \frac{\beta_0}{B_1}}\|M\|_{S^{-1}}\tau_k = \mathcal{O}\bigg(\frac{1}{k}\bigg).
\end{aligned}
\end{equation}
\end{lemma}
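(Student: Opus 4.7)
The plan is to split the argument into three steps, two algebraic and one structural.

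First I would establish the bound $\tau_k\leq 2/(k+2)$ by induction on $k$. The structural observation is that the line search in Algorithm~\ref{alg:asgardlinesearch} produces a monotone nondecreasing sequence $(B_k)_{k\in\NN}$: indeed, the initialization $B_{k+1}\leftarrow a^{-1}B_k$ followed by at least one multiplication by $a$ inside the \textsc{repeat} loop forces $B_{k+1}\in\{B_k,aB_k,a^2B_k,\dots\}$, so $B_{k+1}\geq B_k$. Combined with the identity $\tau_k^2 B_{k+1}=(1-\tau_k)\tau_{k-1}^2 B_k$ from Step~\ref{step:tau}, this gives $\tau_k^2\leq (1-\tau_k)\tau_{k-1}^2$, i.e.\ exactly the scalar inequality \eqref{eq101} already analysed in the proof of Lemma~\ref{lem:choose_parameters_asgard}, and the induction there applies verbatim.

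Second, I would write the two parameter sequences in closed form by telescoping. For $k\geq 1$,
\begin{equation*}
\tau_k^2 B_{k+1}=B_1\prod_{l=1}^k(1-\tau_l),\qquad \beta_{k+1}=\beta_0\prod_{l=0}^k\frac{1}{1+\tau_l},
\end{equation*}
and, using $\tau_0=1$, the ratio simplifies to
$\frac{\beta_{k+1}}{\tau_k^2 B_{k+1}}=\frac{\beta_0}{2\,B_1\,\prod_{l=1}^k(1-\tau_l^2)}$.
Since every factor $1-\tau_l^2\in(0,1]$, the product is at most $1$, which yields immediately the lower bound $\beta_{k+1}\geq\frac{\beta_0}{2B_1}\tau_k^2 B_{k+1}$.

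Third, for the upper bound I would bound the same product from below via Step~1: since $1-\tau_l^2\geq 1-4/(l+2)^2=l(l+4)/(l+2)^2$, a telescoping computation gives $\prod_{l=1}^k l(l+4)/(l+2)^2=(k+3)(k+4)/[6(k+1)(k+2)]\geq 1/6$, whence $\beta_{k+1}\leq \frac{3\beta_0}{B_1}\tau_k^2 B_{k+1}$. Invoking the line-search termination bound $B_{k+1}\leq a(L_f+\|M\|_{S^{-1}}^2/\beta_{k+1})$ and multiplying through by $\beta_{k+1}$, I arrive at the quadratic inequality
\begin{equation*}
\beta_{k+1}^2-\tfrac{3a\beta_0}{B_1}\tau_k^2 L_f\,\beta_{k+1}-\tfrac{3a\beta_0}{B_1}\tau_k^2\|M\|_{S^{-1}}^2\leq 0,
\end{equation*}
whose unique nonnegative root is precisely the upper bound in the statement. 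The asymptotic $\beta_{k+1}\sim\sqrt{3a\beta_0/B_1}\,\|M\|_{S^{-1}}\tau_k=\mathcal{O}(1/k)$ then follows because $\tau_k^2 L_f$ is negligible next to $\tau_k\|M\|_{S^{-1}}$ as $\tau_k\to 0$, combined with $\tau_k\leq 2/(k+2)$.

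The main obstacle I anticipate is making the line-search bound $B_{k+1}\leq a(L_f+\|M\|_{S^{-1}}^2/\beta_{k+1})$ precise: since $\tau_k$ and $\beta_{k+1}$ are recomputed together with $B_{k+1}$ inside the loop, one has to argue that once $B_{k+1}$ exceeds the Lipschitz constant $L_f+\|M\|_{S^{-1}}^2/\beta_{k+1}$ of $\nabla(f+h_{\beta_{k+1}}\circ M)$ the descent test on Step~\ref{step:linesearchtest} is automatically satisfied, so that the smallest accepted $B_{k+1}$ cannot exceed $a$ times that Lipschitz constant. Once this has been justified, the remaining content of the lemma is purely algebraic manipulation of the telescoped product formulas above.
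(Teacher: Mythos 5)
Your proposal is correct and follows essentially the same route as the paper's proof: monotonicity of $(B_k)$ from the line-search structure gives $\tau_k\leq 2/(k+2)$, the telescoped products for $\tau_k^2B_{k+1}$ and $\beta_{k+1}$ combined with $\prod_{l=1}^k(1-\tau_l^2)\in[1/6,1]$ give the two-sided bound, and the termination condition $B_{k+1}\leq a(L_f+\|M\|_{S^{-1}}^2/\beta_{k+1})$ yields the quadratic inequality and the asymptotics. The "obstacle" you flag about justifying the termination bound is handled in the paper exactly as you suggest, via the descent lemma for the $(L_f+\|M\|_{S^{-1}}^2/\beta_{k+1})$-Lipschitz gradient of $x\mapsto f(x)+h_{\beta_{k+1}}(Mx;\dot y)$.
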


\begin{proof}
For every $k\in\NN$, since Lemma~\ref{lem:move_beta}~\ref{lem:move_betaiii} states that the function $y\mapsto h_{\beta_{k+1}}(y;\dot y)$ has $\frac{1}{\beta_{k+1}}$-Lipschitz gradient, we deduce that the function $x \mapsto f(x)+ h_{\beta_{k+1}}(Mx;\dot y)$ has $L_f + \frac{\|M\|_{S^{-1}}^2}{\beta_{k+1}}$-Lipschitz gradient and thus, in Algorithm~\ref{alg:asgardlinesearch}, when
the line search terminates, one necessarily has $B_{k+1} \leq a (L_f + \frac{\|M\|_{S^{-1}}^2}{\beta_{k+1}})$. 
Now we deduce from the definition of $(\tau_k)_{k\in\NN}$ that 
\begin{equation}
(\forall k\in\NN)\quad \tau_k^2 B_{k+1}=(1-\tau_k)\tau_{k-1}^2B_k=\tau_0^2B_1\prod_{l=1}^k(1-\tau_l),
\end{equation}
so we need to study the bounds for $(\tau_k)_{k\in\NN}$. We now prove by induction that 
\begin{equation}
\label{eq200} 
(\forall k\in\NN)\quad\tau_k \leq \frac{2}{k+2}.
\end{equation}
We clearly have $\tau_0 = 1 \leq \frac{2}{0+2}$. Suppose that there exists $k_0\in\NN\backslash\{0\}$ 
such that $\tau_{k_0-1} \leq \frac{2}{k_0+1}$ and that $\tau_{k_0} > \frac{2}{k_0+2}$. 
Then, as $B_{k_0+1} \geq B_{k_0}$, we have $\frac{1-\tau_{k_0}}{\tau_{k_0}^2} = \frac{B_{k_0+1}}{\tau_{k_0-1}^2B_{k_0}} \geq \frac{1}{\tau_{k_0-1}^2}$. This would lead to 
\begin{equation}
\frac{k_0^2 + 2k_0 +1}{4} = \frac{(k_0+1)^2}{4} \leq \frac{1}{\tau_{k_0-1}^2}
\leq \frac{1-\tau_{k_0}}{\tau_{k_0}^2} < \frac{(k_0+2)^2}{4} - \frac{k_0+2}{2} = \frac{k_0^2 + 2 k_0}{4}.
\end{equation}
This contradiction proves \eqref{eq200}. Since we have
\begin{equation}
(\forall k\in\NN)\quad\beta_{k+1} = \beta_1 \prod_{l=1}^k \frac{1}{1+\tau_l} = \beta_1 \frac{\prod_{l=1}^k(1-\tau_l)}{\prod_{l=1}^k (1 - \tau_l^2)}= \frac{\beta_0}{2} \frac{\prod_{l=1}^k(1-\tau_l)}{\prod_{l=1}^k (1 - \tau_l^2)},
\end{equation}
and since
\begin{equation}
(\forall k\in\NN)\quad 1 \geq \prod_{l=1}^k (1 - \tau_l^2) \geq \prod_{l=1}^k \Big(1 - \frac{4}{(l+2)^2}\Big) = \frac{(k+4)(k+3) 2 . 1 }{(k+2)(k+1) 4 . 3} \geq \frac 16,
\end{equation}
we get
\begin{equation}
(\forall k\in\NN)\quad\frac{\beta_0}{2} \prod_{l=1}^k(1-\tau_l) \leq \beta_{k+1} \leq 3 \beta_0 \prod_{l=1}^k(1-\tau_l).
\end{equation}
Therefore,
\begin{equation}
(\forall k\in\NN)\quad\frac{\beta_0}{2} \frac{\tau_k^2 B_{k+1}}{B_1} \leq \beta_{k+1} \leq 3 \beta_0 \frac{\tau_k^2 B_{k+1}}{B_1}
\end{equation}
Since $(\forall k\in\NN)$ $B_{k+1} \leq a (L_f + \frac{\|M\|_{S^{-1}}^2}{\beta_{k+1}})$, it follows that 
\begin{equation}
(\forall k\in\NN)\quad\beta_{k+1}^2 - 3a\frac{\beta_0}{B_1}\tau_k^2\bigg[L_f \beta_{k+1} + \|M\|_{S^{-1}}^2\bigg] \leq 0,
\end{equation}
which implies that 
\begin{equation}
(\forall k\in\NN)\quad\beta_{k+1} \leq \frac 12 \Bigg(3 a \frac{\beta_0}{B_1} \tau_k^2 L_f + \sqrt{\bigg[3 a \frac{\beta_0}{B_1} \tau_k^2 L_f\bigg]^2 + 
12 a \frac{\beta_0}{B_1} \tau_k^2 \|M\|_{S^{-1}}^2} \Bigg)
 \sim \sqrt{3a \frac{\beta_0}{B_1}}\|M\|_{S^{-1}}\tau_k = \mathcal{O}\bigg(\frac{1}{k}\bigg)
\end{equation}
and hence,
\begin{equation}
(\forall k\in\NN)\quad\tau_k^2 B_{k+1} \leq \frac{2B_1}{\beta_0}\beta_{k+1} = \mathcal{O}\bigg(\frac{1}{k}\bigg).
\end{equation}
\end{proof}

\subsection{Speed of convergence}

The convergence theorem is based on the decrease of the smoothed optimality gap.
We prove in Proposition~\ref{t1} that for every iteration $k\in\NN$, $F_{\beta_{k+1}} - F^\star$ decreases 
as $\mathcal{O}(1/k)$. Then, using \cite[Lemma 2.1]{VolkanQuocOlivier} and the decrease of
the smoothness parameter to 0, we get the speed of convergence in 
function value and infeasibility.

\begin{proposition}
\label{t1}
Consider the setting of Problem~\ref{p1}. Let $(\bar x^k)_{k\in\NN}$ 
be generated by the ASGARD variants (Algorithms~\ref{eq2}, \ref{eq2b}, \ref{alg:asgardlinesearch})  and define 
\begin{equation}
(\forall k\in\NN)\quad F_{\beta_k}\colon x\mapsto f(x)+ g(x) + h_{\beta_k}(Mx; \dot y).
\end{equation} 
Then for  $x^\star\in\BP^\star$, we have:
\begin{equation}
(\forall k\in\NN)\quad F_{\beta_{k+1}}(\bar x^{k+1}) - F^\star \leq 
\begin{cases}
\frac{B_1}{2(k+1)}\|x^\star-\tilde x^0\|^2, &\text{for Algorithm~\ref{eq2}},\\
\frac{B_1}{k+1} \Big(\frac 12 \|x^\star-\tilde x^0\|^2 + \sigma \left(\frac{B_1}{2L_f}\right)^{1+\delta} \zeta(1+\delta) \Big), &\text{for Algorithm~\ref{eq2b}},\\
\frac{ B_1\beta_{k+1}}{\beta_0}\|x^\star-\tilde x^0\|^2 = \mathcal{O}\bigg(\frac{\|x^\star - \tilde x_0\|^2}{k}\bigg), &\text{for Algorithm~\ref{alg:asgardlinesearch}},
\end{cases}
\end{equation}
where $\zeta(s) = \sum_{i=0}^{+\infty} \frac{1}{(i+1)^s}$.
\end{proposition}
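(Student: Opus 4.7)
The plan is to prove, for every $k\in\NN$ and for all three algorithms, a one-step recursion of the form
\[
F_{\beta_{k+1}}(\bar x^{k+1}) - F^\star \leq (1-\tau_k)\bigl(F_{\beta_k}(\bar x^k) - F^\star\bigr) + \frac{\tau_k^2 B_{k+1}}{2}\bigl(\|\tilde x^k - x^\star\|^2 - \|\tilde x^{k+1} - x^\star\|^2\bigr) + \varepsilon_k,
\]
with $\varepsilon_k \equiv 0$ for Algorithms~\ref{eq2} and~\ref{alg:asgardlinesearch} and with a summable $\varepsilon_k$ for Algorithm~\ref{eq2b}. Dividing this inequality by $\tau_k^2 B_{k+1}$ and invoking the identity $\frac{1-\tau_k}{\tau_k^2 B_{k+1}} = \frac{1}{\tau_{k-1}^2 B_k}$ (Lemma~\ref{lem:choose_parameters_asgard} for Algorithms~\ref{eq2}--\ref{eq2b}; imposed by Step~\ref{step:tau} for Algorithm~\ref{alg:asgardlinesearch}) telescopes the sum; multiplying back by $\tau_k^2 B_{k+1}$, discarding the nonnegative terminal distance, and plugging in the bounds $\tau_k^2 B_{k+1} \leq B_1/(k+1)$ of Lemma~\ref{lem:choose_parameters_asgard} or $\tau_k^2 B_{k+1} \leq (2B_1/\beta_0)\beta_{k+1}$ of Lemma~\ref{lem:choose_parameters_asgard2} yields the three stated rates.

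To derive the single-iteration inequality I would first observe that $x \mapsto f(x) + h_{\beta_{k+1}}(Mx;\dot y)$ has $B_{k+1}$-Lipschitz gradient in $\|\cdot\|$, a direct consequence of Lemma~\ref{lem:move_beta}~\ref{lem:move_betaiii} and the definition of $\|M\|_{S^{-1}}$, and apply its descent lemma at $\bar x^{k+1}$ around $\hat x^k$; using $\bar x^{k+1} - \hat x^k = \tau_k(\tilde x^{k+1} - \tilde x^k)$, the quadratic remainder becomes $\frac{\tau_k^2 B_{k+1}}{2}\|\tilde x^{k+1} - \tilde x^k\|^2$. I would then split the linear remainder along the identity $\bar x^{k+1} - \hat x^k = (1-\tau_k)(\bar x^k - \hat x^k) + \tau_k(x^\star - \hat x^k) + \tau_k(\tilde x^{k+1} - x^\star)$: convexity of $f$ and the cocoercivity estimate Lemma~\ref{lem:move_beta}~\ref{lem:move_betaiii} absorb the $(1-\tau_k)$ piece into $(1-\tau_k)[f(\bar x^k) + h_{\beta_{k+1}}(M\bar x^k;\dot y)]$, at the cost of a negative residual $-(1-\tau_k)\frac{\beta_{k+1}}{2}\|\nabla h_{\beta_{k+1}}(M\hat x^k;\dot y) - \nabla h_{\beta_{k+1}}(M\bar x^k;\dot y)\|^2_S$; the magic-return-to-zero inequality Lemma~\ref{lem:move_beta}~\ref{lem:move_betaiv} absorbs the $\tau_k(x^\star - \hat x^k)$ piece into $\tau_k[f(x^\star) + h(Mx^\star)]$, at the cost of a second negative residual $-\tau_k\frac{\beta_{k+1}}{2}\|\nabla h_{\beta_{k+1}}(M\hat x^k;\dot y) - \dot y\|^2_S$; and the prox inequality Lemma~\ref{s:le2} evaluated at the update defining $\tilde x^{k+1}$ with test point $x^\star$ converts the $\tau_k(\tilde x^{k+1} - x^\star)$ piece into $\tau_k[g(\tilde x^{k+1}) - g(x^\star)] + \frac{\tau_k^2 B_{k+1}}{2}(\|\tilde x^k - x^\star\|^2 - \|\tilde x^{k+1} - x^\star\|^2 - \|\tilde x^k - \tilde x^{k+1}\|^2)$, the last negative quadratic annihilating the descent-lemma remainder. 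Convexity of $g$ supplies $g(\bar x^{k+1}) \leq (1-\tau_k)g(\bar x^k) + \tau_k g(\tilde x^{k+1})$, so the right-hand side reconstructs $(1-\tau_k)F_{\beta_{k+1}}(\bar x^k) + \tau_k F^\star$; finally, the upgrade $F_{\beta_{k+1}}(\bar x^k) \rightarrow F_{\beta_k}(\bar x^k)$ via Lemma~\ref{lem:move_beta}~\ref{lem:move_betaii} produces a positive $\frac{(1-\tau_k)\tau_k\beta_{k+1}}{2}\|\nabla h_{\beta_{k+1}}(M\bar x^k;\dot y) - \dot y\|^2_S$, and Lemma~\ref{lem:move_beta}~\ref{lem:move_betav} applied with parameter $\tau_k$ shows that this excess is exactly dominated by the sum of the two negative residuals collected above, so every squared-gradient term cancels.

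For Algorithm~\ref{eq2b}, the prox step uses $\nabla f(\hat{\hat x}^k)$ in place of $\nabla f(\hat x^k)$; retracing the derivation, the $L_f$-Lipschitz continuity of $\nabla f$ together with Young's inequality packages the associated error into the quantity $\tfrac12\|\bar x^{k+1} - \hat{\hat x}^k\|^2 - \tfrac12\|\bar x^{k+1} - \hat x^k\|^2$, which the line-search predicate bounds by $\sigma(\tau_k^2 B_{k+1}/L_f)^{2+\delta}$; after division of the recursion by $\tau_k^2 B_{k+1}$, the resulting error series is controlled by $\sum_k \sigma(\tau_k^2 B_{k+1}/L_f)^{1+\delta} \leq \sigma(B_1/(2L_f))^{1+\delta}\zeta(1+\delta)$, which is precisely the extra term in the stated bound. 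For Algorithm~\ref{alg:asgardlinesearch}, the line-search test on Step~\ref{step:linesearchtest} plays the role of the descent lemma, so that no a priori knowledge of the Lipschitz constant of $\nabla h_{\beta_{k+1}}(M\cdot;\dot y)$ is required, and Lemma~\ref{lem:choose_parameters_asgard2} converts the final estimate into the claimed $\mathcal{O}(1/k)$ rate through $\tau_k^2 B_{k+1} \leq (2 B_1/\beta_0)\beta_{k+1}$. The main obstacle is the precise bookkeeping of the residual squared-gradient terms: the positive excess generated by the $\beta_k \rightarrow \beta_{k+1}$ shrinkage must be cancelled exactly by the negative contributions of cocoercivity and of the magic-return inequality via Lemma~\ref{lem:move_beta}~\ref{lem:move_betav}, and any mismatch in the coefficients would prevent the telescoping and spoil the $\mathcal{O}(1/k)$ rate.
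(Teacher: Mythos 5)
Your proposal follows essentially the same route as the paper's proof: the same descent-lemma decomposition of $\bar x^{k+1}-\hat x^k$ into the $(1-\tau_k)$, $\tau_k(x^\star-\cdot)$ and prox pieces handled respectively by the cocoercivity inequality, the return-to-$h$ inequality and Lemma~\ref{s:le2}, the same exact cancellation of the squared-gradient residuals via Lemma~\ref{lem:move_beta}~\ref{lem:move_betav} together with $\beta_k-\beta_{k+1}=\tau_k\beta_{k+1}$, and the same telescoping through $\frac{1-\tau_k}{\tau_k^2B_{k+1}}=\frac{1}{\tau_{k-1}^2B_k}$ followed by the parameter bounds of Lemmas~\ref{lem:choose_parameters_asgard} and~\ref{lem:choose_parameters_asgard2}. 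The treatment of the old-gradient error via the test predicate and the $\zeta(1+\delta)$ sum likewise matches the paper, so the argument is correct as outlined.
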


\begin{proof}
First we note that the arguments for Algorithms~\ref{eq2} and ~\ref{alg:asgardlinesearch} 
are similar to those of Algorithm~\ref{eq2b} by setting $(\forall k\in\NN)$ $\hat{\hat x}^k = \hat x^k$.  
We therefore prove the convergence for Algorithm~\ref{eq2b}. Now let us fix $x^\star\in\BP^\star$. Then, we have
\begin{multline}
\label{s:e4}
(\forall k\in\NN)\quad f(\bar x^{k+1}) + h_{\beta_{k+1}}(M\bar x^{k+1};\dot y) \leq f(\hat{\hat x}^k) + h_{\beta_{k+1}}(M\hat x^k;\dot y)
+ \Pair{\bar x^{k+1} - \hat{\hat x}^k}{\nabla f(\hat{\hat x}^k)} \\
+ \Pair{\bar x^{k+1} - \hat x^k}{v^k)}+ \frac{L_f}{2}\norm{\bar x^{k+1} - \hat{\hat x}^k}^2 
+ \frac{\|M\|_{S^{-1}}^2}{2\beta_{k+1}} \norm{\bar x^{k+1} - \hat x^k}^2.
\end{multline}
Because $g$ is convex and because Algorithm~\ref{eq2b} yield $(\forall k\in\NN)$ $\bar x^{k+1} = (1-\tau_k)\bar x^k + \tau_k\tilde x^{k+1}$, we get
\begin{equation}
\label{s:e5}
(\forall k\in\NN)\quad g(\bar x^{k+1})\leq (1-\tau_k) g( \bar x^k)+ \tau_k g(\tilde x^{k+1}).
\end{equation}
It now follows from Lemma~\ref{s:le2} that
\begin{equation}
\label{s:e6}
\begin{aligned}
(\forall k\in\NN)\quad g(\tilde x^{k+1}) &\leq g(x^\star) -\tau_kB_{k+1}\Pair{x^\star - \tilde x^{k+1}}{\tilde x^k -\tilde x^{k+1} -\tau_k^{-1}B_{k+1}^{-1}\big(\nabla f(\hat{\hat x}^k)+v^k\big)}\\
&= g(x^\star) -\tau_kB_{k+1}\Pair{x^\star - \tilde x^{k+1}}{\tilde x^k-\tilde x^{k+1} } +\Pair{x^\star - \tilde x^{k+1}}{\nabla f(\hat{\hat x}^k)+v^k}\\
&= g(x^\star) +\Pair{x^\star - \tilde x^{k+1}}{\nabla f(\hat{\hat x}^k)+v^k}\\
&+\frac{\tau_kB_{k+1}}{2}\|x^\star -\tilde x^k\|^2 - \frac{\tau_kB_{k+1}}{2}\|x^\star -\tilde x^{k+1}\|^2 - \frac{\tau_kB_{k+1}}{2}\|\tilde x^{k+1} -\tilde x^k\|^2.
\end{aligned}
\end{equation}
In turn, we obtain from \eqref{s:e4} and the fact that $(\forall k\in\NN)$ $\bar x^{k+1} - \hat{\hat x}^k = \hat x^k - \hat{\hat x}^k + \tau_k(\tilde x^{k+1} - \tilde x^k)$ that 
\begin{equation}
\label{s:e10}
\begin{aligned}
(\forall k\in\NN)\quad F_{\beta_{k+1}}(\bar x^{k+1}) & \leq f(\hat{\hat x}^k) + h_{\beta_{k+1}}(M\hat x^k;\dot y) + (1-\tau_k)g(\bar x^k) +\tau_kg(x^\star) 
+\tau_k\Pair{x^\star -\tilde x^k}{v^k}\\
&+\Pair{\hat x^k - \hat{\hat x}^k+ \tau_k(x^\star -\tilde x^k)}{\nabla f(\hat{\hat x}^k)} +\frac{\tau_k^2B_{k+1}}{2}\|x^\star -\tilde x^k\|^2 \\
&- \frac{\tau_k^2B_{k+1}}{2}\|x^\star -\tilde x^{k+1}\|^2
+\frac{L_f}{2}\norm{\bar x^{k+1} - \hat{\hat x}^k}^2  - \frac{L_f}{2}\norm{\bar x^{k+1} - \hat x^k}^2.
\end{aligned}
\end{equation}
Since Algorithm~\ref{eq2b} yield $(\forall k\in\NN)$ $\tau_k (\hat x^k - \tilde x^k) = (1-\tau_k)(\bar x^k - \hat x^k)$, we have
\begin{equation}
(\forall k\in\NN)\quad\tau_k (x^\star  - \tilde x^k) = \tau_k (x^\star  - \hat x^k) +  (1-\tau_k)(\bar x^k - \hat x^k),
\end{equation}
and hence,
\begin{equation}
(\forall k\in\NN)\quad\tau_k\Pair{x^\star -\tilde x^k}{v^k} = \tau_k\Pair{x^\star -\hat x^k}{v^k}
+(1-\tau_k)\Pair{\bar x^k -\hat x^k}{v^k}.
\end{equation}
Moreover, since $(\forall k\in\NN)$ $\hat x^k - \hat{\hat x}^k+ \tau_k(x^\star -\tilde x^k) = 
 \tau_k(x^\star - \hat{\hat x}^k) + (1-\tau_k)(\bar x^k - \hat{\hat x}^k)$, we get
\begin{equation}
(\forall k\in\NN)\quad \Pair{\hat x^k - \hat{\hat x}^k+ \tau_k(x^\star -\tilde x^k)}{\nabla f(\hat{\hat x}^k)} =  \tau_k\Pair{x^\star -\hat{\hat x}^k}{\nabla f(\hat{\hat x}^k)}
+(1-\tau_k)\Pair{\bar x^k -\hat{\hat x}^k}{\nabla f(\hat{\hat x}^k)}.
\end{equation}
Because $f$ is convex, \eqref{sub} yields
\begin{equation}
(\forall k\in\NN)\quad f(\hat{\hat x}^k)+\Pair{\bar x^k-\hat{\hat x}^k}{\nabla f(\hat{\hat x}^k)}\leq f(\bar x^k)
\quad\text{and}\quad 
f(\hat{\hat x}^k)+\Pair{x^\star-\hat{\hat x}^k}{\nabla f(\hat{\hat x}^k)}\leq f(x^\star),
\end{equation}
and hence, we derive from \eqref{eq:cocoercivity_nabla_h_beta} that
\begin{multline}
\label{s:e7}
(\forall k\in\NN)\quad f(\hat{\hat x}^k) + h_{\beta_{k+1}}(M\hat x^k; \dot y)+  \Pair{\bar x^k  - \hat{\hat x}^k}{\nabla f(\hat{\hat x}^k) } + \Pair{\bar x^k  - \hat x^k}{ v^k}\\
\leq  f(\bar x^k) + h_{\beta_{k+1}}(M\bar x^k; \dot y) - \frac{\beta_{k+1}}{2}\norm{\nabla h_{\beta_{k+1}}(M\hat x^k; \dot y) -\nabla h_{\beta_{k+1}}(M\bar x^k; \dot y)}_S^2.
\end{multline}
and from \eqref{eq:magic_return_to_zero} that
\begin{multline}
\label{s:e8}
(\forall k\in\NN)\quad f(\hat{\hat x}^k) + h_{\beta_{k+1}}(M\hat x^k; \dot y)+  \Pair{x^\star  - \hat{\hat x}^k}{\nabla f(\hat{\hat x}^k)}+  \Pair{x^\star  - \hat x^k}{ v^k}\\
\leq  f(x^\star) + h(Mx^\star) -  \frac{\beta_{k+1}}{2}\norm{\nabla h_{\beta_{k+1}}(M\hat x^k; \dot y) - \dot y}_S^2.
\end{multline}
On the other hand, we deduce from \eqref{eq:convexity_h_beta} that 
\begin{equation}
\label{s:e9}
\begin{aligned}
(\forall k\in\NN)\quad h_{\beta_{k+1}}(M\bar x^k; \dot y)
&\leq h_{\beta_{k}}(M\bar x^k; \dot y) + \frac{\beta_k - \beta_{k+1}}{2}\norm{\nabla h_{\beta_{k+1}}(M\bar x^k; \dot y) - \dot y}_S^2
\end{aligned}
\end{equation}
and from \eqref{eq:combine_tau} that
\begin{multline}
(\forall k\in\NN)\quad - (1-\tau_k)\frac{\beta_{k+1}}{2}\norm{\nabla h_{\beta_{k+1}}(M\hat x^k; \dot y) -\nabla h_{\beta_{k+1}}(M\bar x^k; \dot y)}_S^2 - \tau_k \frac{\beta_{k+1}}{2}\norm{\nabla h_{\beta_{k+1}}(M\hat x^k; \dot y) - \dot y}_S^2 \\
\leq -\tau_k(1-\tau_k)\frac{\beta_{k+1}}{2}\norm{\nabla h_{\beta_{k+1}}(M\bar x^k; \dot y) - \dot y}_S^2
\end{multline}
Altogether, by combining \eqref{s:e10} and \eqref{s:e7}-\eqref{s:e9}, we get
\begin{equation}
\begin{aligned}
(\forall k\in\NN)\quad F_{\beta_{k+1}}(\bar x^{k+1})& \leq (1-\tau_k) \big(g( \bar x^k) + f(\hat{\hat x}^k) + h_{\beta_{k+1}}(M \hat x^k;\dot y) + \Pair{\bar x^k - \hat{\hat x}^k}{\nabla f(\hat{\hat x}^k)}+ \Pair{\bar x^k - \hat x^k}{ v^k}\big)   \\
&\quad+ \tau_k \big( g(x^\star ) +f(\hat x^k) + h_{\beta_{k+1}}(M \hat x^k;\dot y) +  \Pair{ x^\star  - \hat{\hat x}^k}{\nabla f(\hat{\hat x}^k)}+\Pair{ x^\star  - \hat x^k}{v^k}\big)  \\
& \hskip - 9mm+ \tau_k^2 \frac{B_{k+1}}{2}\norm{x^\star - \tilde x^k}^2- \tau_k^2 \frac{B_{k+1}}{2}\norm{x^\star  - \tilde x^{k+1}}^2 +\frac{L_f}{2}\norm{\bar x^{k+1} - \hat{\hat x}^k}^2  - \frac{L_f}{2}\norm{\bar x^{k+1} - \hat x^k}^2  \\
&\leq(1-\tau_k) \big(g( \bar x^k) + f(\bar x^k) + h_{\beta_{k}}(M \bar x^k;\dot y)\big)+ \tau_k \big( g(x^\star) +f(x^\star) + h(Mx^\star) \big)   \\
& \qquad + \Big[\frac{(1-\tau_k)(\beta_k - \beta_{k+1})}{2} - \frac{\tau_k (1-\tau_k)\beta_{k+1}}{2} \Big]\norm{\nabla h_{\beta_{k+1}}(M \bar x^k;\dot y) - \dot y}_S^2   \\
& \hskip -9 mm+ \tau_k^2 \frac{B_{k+1}}{2}\norm{x^\star- \tilde x^k}^2- \tau_k^2 \frac{B_{k+1}}{2}\norm{x^\star  - \tilde x^{k+1}}^2
+\frac{L_f}{2}\norm{\bar x^{k+1} - \hat{\hat x}^k}^2  - \frac{L_f}{2}\norm{\bar x^{k+1} - \hat x^k}^2.
\end{aligned}
\end{equation}
It follows from the definition of $(\tau_k)_{k \geq 0}$ and $(\beta_k)_{k \geq 0}$ that 
\begin{multline}
\label{eq201}
(\forall k\in\NN)\quad F_{\beta_{k+1}}(\bar x^{k+1}) - F^\star + \frac{\tau_{k}^2 B_{k+1}}{2}\norm{x^\star - \tilde x^{k+1}}^2
\leq (1-\tau_k) (F_{\beta_k}(\bar x^k)- F^\star)+ \frac{\tau_{k}^2 B_{k+1}}{2}\norm{x^\star - \tilde x^k}^2 \\
+\frac{L_f}{2}\norm{\bar x^{k+1} - \hat{\hat x}^k}^2  - \frac{L_f}{2}\norm{\bar x^{k+1} - \hat x^k}^2 .
\end{multline}
On the one hand, \eqref{eq201} yields
\begin{equation}
\label{eq202}
F_{\beta_1}(\bar x^1)- F^\star\leq \frac{\tau_0^2 B_1}{2}\|x^\star-\tilde x^0\|^2
+ \tau_0^2B_1\sum_{i=0}^0 \frac{L_f}{2 \tau_i^2 B_{i+1}}\big(\norm{\bar x^{i+1} - \hat{\hat x}^i}^2  - \norm{\bar x^{i+1} - \hat x^i}^2 \big).
\end{equation}
On the other hand, since
\begin{equation}
(\forall k\in\NN\backslash\{0\})\quad 
\frac{1-\tau_k}{\tau_k^2B_{k+1}}=\frac{1}{\tau_{k-1}^2B_k},
\end{equation}
it follows from \eqref{eq201} and \eqref{eq202} that
\begin{equation}
\label{eq203}
\begin{aligned}
(\forall k\in\NN\backslash\{0\})\quad\frac{1}{\tau_{k}^2 B_{k+1}}&(F_{\beta_{k+1}}(\bar x^{k+1})- F^\star) + \frac{1}{2}\norm{x^\star - \tilde x^{k+1}}^2 \\
&\hskip -15 mm\leq \frac{1-\tau_k}{\tau_{k}^2 B_{k+1}} (F_{\beta_{k}}(\bar x^k) - F^\star)+ \frac{1}{2}\norm{x^\star - \tilde x^k}^2 +\frac{L_f}{2 \tau_k^2 B_{k+1}}\big(\norm{\bar x^{k+1} - \hat{\hat x}^k}^2  - \norm{\bar x^{k+1} - \hat x^k}^2 \big)\\
 &\hskip -15 mm = \frac{1}{\tau_{k-1}^2 B_{k}} (F_{\beta_{k}}(\bar x^k) - F^\star)+ \frac{1}{2}\norm{x^\star - \tilde x^k}^2+\frac{L_f}{2 \tau_k^2 B_{k+1}}\big(\norm{\bar x^{k+1} - \hat{\hat x}^k}^2  - \norm{\bar x^{k+1} - \hat x^k}^2 \big)\\
 & \hskip -15 mm \leq\frac{1}{\tau_0^2 B_1} (F_{\beta_{1}}(\bar x^1)- F^\star)+ \frac{1}{2}\norm{x^\star - \tilde x^1}^2 + \sum_{i=1}^k \frac{L_f}{2 \tau_i^2 B_{i+1}}\big(\norm{\bar x^{i+1} - \hat{\hat x}^i}^2  - \norm{\bar x^{i+1} - \hat x^i}^2 \big)\\
 & \hskip -15 mm \leq\frac{1}{2}\norm{x^\star - \tilde x^0}^2 + \sum_{i=0}^k \frac{L_f}{2 \tau_i^2 B_{i+1}}\big(\norm{\bar x^{i+1} - \hat{\hat x}^i}^2  - \norm{\bar x^{i+1} - \hat x^i}^2 \big).
\end{aligned}
\end{equation}
Altogether, \eqref{eq202} and \eqref{eq203} yield
\begin{equation}
\label{eq:decrease_smoothed_gap}
(\forall k\in\NN)\quad F_{\beta_{k+1}}(\bar x^{k+1})- F^\star\leq \frac{\tau_k^2 B_{k+1}}{2}\|x^\star-\tilde x^0\|^2
+ \tau_k^2B_{k+1}\sum_{i=0}^k \frac{L_f}{2 \tau_i^2 B_{i+1}}\big(\norm{\bar x^{i+1} - \hat{\hat x}^i}^2  - \norm{\bar x^{i+1} - \hat x^i}^2 \big).
\end{equation}
We note that in the above inequalities when $(\forall k\in\NN)$ $\hat{\hat x}^k = \hat x^k$ then the second term in the right hand side of the last line vanishes. 
Otherwise, the test 
\begin{equation}
\frac 12\norm{\bar x^{i+1} - \hat{\hat x}^i}^2  - \frac 12\norm{\bar x^{i+1} - \hat x^i}^2
\leq \sigma \big(\frac{\tau_i^2 B_{i+1}}{L_f} \big)^{2+\delta}
\end{equation}
ensures that the additional sum is uniformly bounded since $(\forall k\in\NN)$ $\tau_k^2 B_{k+1} \in \mathcal{O}(1/k)$. 
To conclude, we combine this estimate with Lemma~\ref{lem:choose_parameters_asgard} 
and Lemma~\ref{lem:choose_parameters_asgard2}.
\end{proof}

We are now ready to state the convergence result for Algorithm~\ref{eq2}. 
The convergence characterizations for the other variants of ASGARD follow 
mutatis mutandis for the other variants of ASGARD using 
the same arguments.  As in~\cite{VolkanQuocOlivier}, we consider two important particular cases: 
the case of equality constraints ($h = \iota_{\{c\}}$) and the
case where $h$ is Lipschitz continuous.
\begin{theorem}
\label{t3}
Let $(\bar x^k)_{k\in\NN}$  be the sequence generated by Algorithm~\ref{eq2}.
\begin{enumerate}
\item\label{t3:i}
Suppose that $h = \iota_{\{c\}}$ for some $c \in \GG$ and that 
$\ri(\dom{g})\cap\{x\in\HH\;|\; Mx = c\}\not=\emptyset$. Denote $F\colon x\mapsto f(x) + g(x)$. 
Then the following bounds hold for all $k\in\NN$ and $(x^\star, y^\star)\in\BP^\star\times\BD^\star$:
\begin{equation}
\begin{aligned}
F(\bar x^{k+1}) - F(x^{\star}) & \geq  - \norm{y^\star}_S \norm{M \bar x^{k+1} - c}_{S^{-1}} \\
F(\bar x^{k+1}) - F(x^{\star}) &\leq \frac{1}{k+1}\bigg(\frac{L_f}{2}+\frac{\|M\|_{S^{-1}}^2}{2\beta_0}\bigg) \norm{\tilde{x}^0 - x^\star}^2 + \norm{y^\star}_S \norm{M \bar{x}^{k+1} -c}_{S^{-1}} +\frac{\beta_{0}}{2(k+1)} \norm{y^\star - \dot{y}}_S^2 \\
 \norm{M \bar{x}^{k+1} - c}_{S^{-1}}  &\leq  \frac{\beta_0}{k+1}\bigg[ \norm{y^\star - \dot{y}}_S  + \bigg(\norm{y^\star - \dot{y}}_S^2 + \frac{2}{\beta_0}\bigg(\frac{L_f}{2}+\frac{\|M\|_{S^{-1}}^2}{2\beta_0}\bigg) \norm{\tilde{x}^0 - x^\star}^2 \bigg)^{1/2}\bigg]. 
\end{aligned}
\end{equation}

\item\label{t3:ii}
Suppose that $h$ is $D_\YY$-Lipschitz continuous in the $S$-norm, and denote $F(x) = f(x) + g(x) + h(Mx)$. Then the following bound holds
\begin{equation*}
(\forall k\in\NN)\quad F(\bar x^{k+1}) - F^\star \leq\frac{1}{k+1}\bigg(\frac{L_f}{2} + \frac{\|M\|_{S^{-1}}^2}{2\beta_0}\bigg)\norm{\tilde x^0 - x^\star}^2 
+ \frac{\beta_0}{k+1}\big[D_{\YY}^2 + \|\dot y\|^2\big] .
\end{equation*}
\end{enumerate}
\end{theorem}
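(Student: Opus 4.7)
The plan is to start from Proposition~\ref{t1}, which bounds the smoothed optimality gap $F_{\beta_{k+1}}(\bar x^{k+1}) - F^\star \leq \frac{B_1}{2(k+1)}\|x^\star-\tilde x^0\|^2$, and then translate this into bounds on the true objective $F$ and, in the constrained case, on the infeasibility. Lemma~\ref{lem:choose_parameters_asgard} will be used to replace $\beta_{k+1}$ by $\beta_0/(k+1)$ wherever needed.

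For part~\ref{t3:ii} (Lipschitz $h$), I would first compare $h$ to its smooth approximation. Since $h_\beta(\cdot;\dot y)$ is defined as the maximum of $\langle y,\bar y\rangle - h^*(\bar y) - \tfrac{\beta}{2}\|\bar y-\dot y\|_S^2$, dropping the quadratic penalty gives $h_\beta \leq h$. Conversely, evaluating the penalty at the maximizer of $y\mapsto \langle y,\bar y\rangle - h^*(\bar y)$ and using that $D_\YY$-Lipschitzness of $h$ in the $S$-norm forces $\dom h^* \subseteq \{\bar y : \|\bar y\|_S \leq D_\YY\}$, we get
\begin{equation*}
0 \leq h(y) - h_\beta(y;\dot y) \leq \beta \sup_{\bar y\in\dom h^*}\tfrac12\|\bar y-\dot y\|_S^2 \leq \beta(D_\YY^2 + \|\dot y\|^2).
\end{equation*}
Adding this to the bound of Proposition~\ref{t1} evaluated at $\bar x^{k+1}$ and using $\beta_{k+1}\leq \beta_0/(k+1)$ delivers the claim.

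For part~\ref{t3:i} (equality constraint $h=\iota_{\{c\}}$), the first step is to compute the smoothed approximation explicitly. Since $h^*(y)=\langle c,y\rangle$, the defining maximization is a concave quadratic in $\bar y$ whose optimizer is $\bar y = \dot y + \beta^{-1}S^{-1}(Mx-c)$, leading to
\begin{equation*}
h_\beta(Mx;\dot y) = \langle Mx-c,\dot y\rangle + \tfrac{1}{2\beta}\|Mx-c\|_{S^{-1}}^2.
\end{equation*}
The lower bound on $F(\bar x^{k+1})-F(x^\star)$ is then immediate from the saddle-point inequality $F(x)+\langle Mx-c, y^\star\rangle \geq F(x^\star)$ for all $x$, together with Cauchy-Schwarz in the dual pairing $\langle\cdot,\cdot\rangle \leq \|\cdot\|_S\|\cdot\|_{S^{-1}}$.

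For the upper bound and the infeasibility estimate, the key observation is that for any candidate dual variable $\bar y\in\GG$, the defining max gives $h_{\beta_{k+1}}(Mx;\dot y) \geq \langle Mx-c,\bar y\rangle - \tfrac{\beta_{k+1}}{2}\|\bar y-\dot y\|_S^2$. Taking $\bar y = y^\star$, this yields
\begin{equation*}
F(\bar x^{k+1}) + \langle M\bar x^{k+1}-c, y^\star\rangle \leq F_{\beta_{k+1}}(\bar x^{k+1}) + \tfrac{\beta_{k+1}}{2}\|y^\star-\dot y\|_S^2,
\end{equation*}
and combining with Proposition~\ref{t1} and Cauchy-Schwarz gives the second bound. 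For the infeasibility, I would substitute the explicit form of $h_{\beta_{k+1}}$ into $F_{\beta_{k+1}}(\bar x^{k+1})-F^\star$ and subtract the saddle-point lower bound $F(\bar x^{k+1})\geq F^\star - \langle M\bar x^{k+1}-c, y^\star\rangle$ to obtain
\begin{equation*}
\tfrac{1}{2\beta_{k+1}}\|M\bar x^{k+1}-c\|_{S^{-1}}^2 - \|y^\star-\dot y\|_S\|M\bar x^{k+1}-c\|_{S^{-1}} \leq \tfrac{B_1}{2(k+1)}\|x^\star-\tilde x^0\|^2.
\end{equation*}
The main obstacle, and the technical heart of the proof, is this last step: it is a scalar quadratic inequality in $u:=\|M\bar x^{k+1}-c\|_{S^{-1}}$, which I would solve by completing the square and then absorb $\beta_{k+1}$ into $\beta_0/(k+1)$ via Lemma~\ref{lem:choose_parameters_asgard} to recover precisely the stated constants. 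The proofs for Algorithms~\ref{eq2b} and \ref{alg:asgardlinesearch} follow mutatis mutandis, using the appropriate cases of Proposition~\ref{t1} and the asymptotic bounds of Lemma~\ref{lem:choose_parameters_asgard2}.
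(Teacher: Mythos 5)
Your proposal is correct and follows the same overall strategy as the paper: both parts reduce to Proposition~\ref{t1} via a comparison between $F$ and $F_{\beta_{k+1}}$, combined with the bound $\beta_{k+1}\leq\beta_0/(k+1)$ from Lemma~\ref{lem:choose_parameters_asgard}, and your argument for part~\ref{t3:ii} (bounding $h-h_\beta$ through $\dom h^*\subset B[0,D_\YY]$) is essentially word-for-word the paper's. The only difference is in part~\ref{t3:i}: the paper imports the three translation inequalities as a black box from \cite[Lemma~2.1]{VolkanQuocOlivier}, whereas you re-derive them from the explicit formula $h_\beta(Mx;\dot y)=\Pair{Mx-c}{\dot y}+\tfrac{1}{2\beta}\norm{Mx-c}_{S^{-1}}^2$, the Lagrangian saddle-point inequality, and the scalar quadratic inequality in $\norm{M\bar x^{k+1}-c}_{S^{-1}}$ --- a correct, self-contained reconstruction of exactly what that cited lemma asserts, so the route is the same with the citation unpacked.
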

\begin{proof}
\ref{t3:i} Fix $k\in\NN$. 
Using \cite[Lemma 2.1]{VolkanQuocOlivier}, we get
\begin{align*}
F(\bar x^{k+1}) - F^{\star} & \geq  - \norm{y^\star}_S \norm{M \bar x^{k+1} - c}_{S^{-1}} \\
F(\bar x^{k+1}) - F^{\star} &\leq F_{\beta_{k+1}}(\bar x^{k+1}) - F^{\star} + \norm{y^\star}_S \norm{M \bar{x}^{k+1} -c}_{S^{-1}} + \frac{\beta_{k+1}}{2} \norm{y^\star - \dot{y}}_S^2\\
 \norm{M \bar{x}^{k+1} - c}_{S^{-1}}  &\leq  \beta_{k+1}\Big[ \norm{y^\star - \dot{y}}_S  + \big(\norm{y^\star - \dot{y}}_S^2 + 2 \beta_{k+1}^{-1} (F_{\beta_{k+1}}(\bar x^{k+1}) - F^{\star})\big)^{1/2} \Big]. 
\end{align*}
The first inequality is now proved. By Proposition~\ref{t1},
$F_{\beta_{k+1}}(\bar x^{k+1}) - F^{\star} \leq \frac{B_1}{2(k+1)} \norm{x^\star - \tilde x^0}^2$ and by Lemma~\ref{lem:choose_parameters_asgard}, 
$\beta_{k+1} \leq \frac{\beta_0}{k+2}\leq\frac{\beta_0}{k+1}$. Hence we get the second inequality. For the last inequality,
\begin{multline}
\beta_{k+1}\Big[ \norm{y^\star - \dot{y}}_S  + \big(\norm{y^\star - \dot{y}}_S^2 + 2 \beta_{k+1}^{-1} (F_{\beta_{k+1}}(\bar x^{k+1}) - F^{\star})^{1/2} \Big] 
\\= 
\beta_{k+1}\norm{y^\star - \dot{y}}_S  + \big(\beta_{k+1}^2\norm{y^\star - \dot{y}}_S^2 + 2 \beta_{k+1} (F_{\beta_{k}}(\bar x^{k+1}) - F^{\star})\big)^{1/2} 
\end{multline}
so we just need to use again the inequalities $F_{\beta_{k+1}}(\bar x^{k+1}) - F^{\star} \leq \frac{B_1}{2(k+1)} \norm{x^\star - \tilde x^0}^2$ and 
$\beta_{k+1} \leq \frac{\beta_0}{k+1}$ to conclude.

\ref{t3:ii} For the second case, since $h$ is $D_{\YY}$-Lipschitz continuous, it follows from \cite[Corollary~17.19]{BC11_B} that 
$\dom{h^*}\subset B[0,D_{\YY}]$, the ball centered at 0 and with radius $D_{\YY}$. 
Therefore,
\begin{equation}
\begin{aligned}
(\forall k\in\NN)\quad 
h(M\bar x^{k+1}) 
 &= \sup_{y\in\dom{h^*}}\;\big\{\Pair{M\bar x^{k+1}}{y} - h^*(y)\}\\
 &\leq \max_{y\in B[0, D_{\YY}]}\;\big\{\Pair{M\bar x^{k+1}}{y} - h^*(y)\}\\
&\leq \max_{y\in B[0,D_{\YY}]}\;\big\{\Pair{M\bar x^{k+1}}{y} - h^*(y) -\frac{\beta_{k+1}}{2} \norm{y -\dot y}_S^2\big\} 
+\frac{\beta_{k+1}}{2}\max_{y\in B[0, D_{\YY}]}\;\norm{y - \dot y}_S^2\\
&\leq h_{\beta_{k+1}}(M\bar x^{k+1};\dot y) +\frac{\beta_{k+1}}{2}\max_{y\in B[0, D_{\YY}]}\;\|y-\dot y\|_S^2\\
&\leq h_{\beta_{k+1}}(M\bar x^{k+1};\dot y) +\beta_{k+1}\bigg[\max_{y\in B[0,D_{\YY}]}\;\|y\|_S^2 + \|\dot y\|_S^2\bigg] \\
&\leq h_{\beta_{k+1}}(M\bar x^{k+1};\dot y) +\beta_{k+1}\big[D_{\YY}^2 + \|\dot y\|_S^2\big].
\end{aligned}
\end{equation}
The conclusion now follows from Proposition~\ref{t1}.
\end{proof}

Finally, we extend the above approach for the following multivariate minimization problem.
\begin{problem}
\label{pr2}
Let $m$ be a strictly positive integer, let $\HH$ and $(\GG)_{1\leq i\leq m}$ be real Hilbert spaces, let $(\forall i\in\{1,\ldots,m\})$ $M_i\colon\HH\to\GG_i$ be bounded linear operators, 
and let  $f\colon\HH\to\RR$, $g\colon\HH\to\RX$ and $h_i\colon\GG_i\to\RX$ be proper, closed lower semi-continuous convex functions where $f$ is moreover assumed to have $L_f$-Lipschitz gradient. Consider the following problem
\begin{equation}
F^\star = \inf_{x\in\HH}\;f(x)+g(x)+\sum_{i=1}^mh_i(M_ix)
\end{equation}
and suppose that its set of minimizers is non-empty.
\end{problem}
For each $i\in\{1,\ldots, m\}$, let us choose $S_i$ 
to be a positive definite linear operator and $\dot y_i\in\GG_i$. 
Define
\begin{equation}
(\forall \beta\in\RPP)\quad h_{\beta;i}(\cdot; \dot y_i)\colon y_i\mapsto\max\limits_{\bar y_i\in\GG_i}\{\Pair{y_i}{\bar y_i} - h_i^*(\bar y_i) -\frac \beta 2 \norm{\bar y_i -\dot y_i}_{S_i}^2 \}.
\end{equation}
The following algorithm is an extension of Algorithm~\ref{eq2} to solve Problem~\ref{pr2}. 
The other variants are similar.

\begin{algorithm}
\begin{algorithmic}[1]
\STATE{Inputs: $\|M\|_{S^{-1}}^2=\sum_{i=1}^m\|M_i\|_{S_i^{-1}}^2$, $\tau_0=1$, $\beta_0 >0$, $\bar x^0\in\HH$, $\tilde x^0\in\HH$.}
\FOR{$k = 0, 1, \ldots$}
\STATE{$\hat x^k = (1-\tau_k) \bar x^k + \tau_k \tilde x^k$}
\STATE{$\beta_{k+1} = \frac{\beta_k}{1+\tau_k}$ and $B_{k+1}=L_f + \frac{\|M\|_{S^{-1}}^2}{\beta_{k+1}}$}
\FOR{$i = 1,\ldots, m$}
\STATE{$y_{k;i}^* =  \argmax{y_i\in\GG_i}{\Pair{M_i\hat x^k}{y_i} - h_i^*(y_i) -\frac{\beta_{k+1}}{2} \norm{y_i -\dot y_i}_{S_i}^2}$}
\ENDFOR
\STATE{$v^k =  \sum_{i=1}^mM_i^*y_{k;i}^*$}
\STATE{$\tilde x^{k+1} = \prox_{\frac{1}{\tau_kB_{k+1}}g}\big(\tilde x^k - \frac{1}{\tau_kB_{k+1}}(\nabla f(\hat x^k) + v^k)\big)$}
 \STATE{$\bar x^{k+1} = \hat x^k + \tau_k(  \tilde x^{k+1} - \tilde x^k) = (1-\tau_k) \bar x^k + \tau_k \tilde x^{k+1}$}
\STATE{Find the unique positive $\tau_{k+1}$ such that}
 \[
 \frac{B_{k+1} - L_f}{B_{k+1}}\tau_{k+1}^3 + \tau_{k+1}^2 +  \tau_{k}^2\tau_{k+1} -  \tau_{k}^2 = 0
 \]
\ENDFOR
\RETURN $\bar x^{k+1}$
\end{algorithmic}
\caption{Parallel ASGARD}
\label{eq3}
\end{algorithm}

\begin{proposition}
\label{t2}
Consider the setting of Problem~\ref{pr2}, let $(\bar x^k)_{k\in\NN}$ be the sequence 
generated by Algorithm~\ref{eq3} and define 
\begin{equation}
(\forall k\in\NN)\quad F_{\beta_k}\colon x\mapsto f(x)+ g(x) + \sum_{i=1}^mh_{\beta_k;i}(M_ix;\dot y_i).
\end{equation} 
Then for  any solution $x^\star$ to Problem~\ref{pr2}, we have
\begin{equation}
(\forall k\in\NN)\quad F_{\beta_{k+1}}(\bar x^{k+1}) - F^\star \leq\dfrac{B_1}{2(k+1)}\|x^\star-\tilde x^0\|^2.
\end{equation}
\end{proposition}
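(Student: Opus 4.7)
The plan is to mirror, term by term, the proof of Proposition~\ref{t1} with the single composite $h(M\cdot)$ replaced by the sum $\sum_{i=1}^{m} h_i(M_i\cdot)$ and its smoothing $H_\beta(x) := \sum_{i=1}^{m} h_{\beta;i}(M_i x; \dot y_i)$. The first step is to verify that $H_{\beta_{k+1}}$ has gradient $\sum_{i=1}^{m} M_i^{*}\nabla h_{\beta_{k+1};i}(M_i\cdot;\dot y_i)$ that is Lipschitz with constant $\|M\|_{S^{-1}}^{2}/\beta_{k+1}$: since by Lemma~\ref{lem:move_beta}\ref{lem:move_betaiii} each $h_{\beta_{k+1};i}(\cdot;\dot y_i)$ has $1/\beta_{k+1}$-Lipschitz gradient in the norm $\|\cdot\|_{S_i^{-1}}$, the contribution of the $i$-th term to the gradient Lipschitz constant of $H_{\beta_{k+1}}\circ\,\mathrm{Id}$ is bounded by $\|M_i\|_{S_i^{-1}}^{2}/\beta_{k+1}$ and these add up to the definition $\|M\|_{S^{-1}}^{2}=\sum_i \|M_i\|_{S_i^{-1}}^{2}$ used in Algorithm~\ref{eq3}. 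Consequently, $f+H_{\beta_{k+1}}$ is $B_{k+1}$-smooth, which makes the descent step identical to \eqref{s:e4} (with $\hat{\hat x}^k=\hat x^k$ and $v^k=\sum_i M_i^{*}y^{*}_{k;i}$).

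Next, I would apply Lemma~\ref{lem:move_beta}\ref{lem:move_betaiii}, \ref{lem:move_betaiv}, \ref{lem:move_betav} \emph{separately} to each $h_{\beta_{k+1};i}$ at the pairs $(M_i\hat x^k, M_i\bar x^k)$ and $(M_i\hat x^k, M_i x^{\star})$ and sum over $i$. Summation produces the multivariate analogues of \eqref{s:e7}, \eqref{s:e8}, \eqref{s:e9}, namely
\begin{align*}
H_{\beta_{k+1}}(\hat x^k)+\langle \bar x^k-\hat x^k,v^k\rangle &\leq H_{\beta_{k+1}}(\bar x^k)-\tfrac{\beta_{k+1}}{2}\sum_i \|\nabla h_{\beta_{k+1};i}(M_i\hat x^k;\dot y_i)-\nabla h_{\beta_{k+1};i}(M_i\bar x^k;\dot y_i)\|_{S_i}^{2},\\
H_{\beta_{k+1}}(\hat x^k)+\langle x^{\star}-\hat x^k,v^k\rangle &\leq \sum_i h_i(M_i x^{\star})-\tfrac{\beta_{k+1}}{2}\sum_i \|\nabla h_{\beta_{k+1};i}(M_i\hat x^k;\dot y_i)-\dot y_i\|_{S_i}^{2},
\end{align*}
and the corresponding summed $\beta$-convexity and \eqref{eq:combine_tau} identities. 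These are exactly the ingredients needed to reproduce the chain leading to inequality \eqref{eq201}; the cross terms between different indices $i$ never appear because each $h_{\beta_{k+1};i}$ is handled in isolation and then aggregated.

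With these pieces, the combination of the $f$-descent inequality, the convexity of $g$, the proximal inequality of Lemma~\ref{s:le2}, the summed smoothing inequalities, and the algorithmic identity $\bar x^{k+1}=\hat x^k+\tau_k(\tilde x^{k+1}-\tilde x^k)$ yields
\begin{equation*}
F_{\beta_{k+1}}(\bar x^{k+1})-F^{\star}+\tfrac{\tau_k^{2}B_{k+1}}{2}\|x^{\star}-\tilde x^{k+1}\|^{2}
\leq (1-\tau_k)\bigl(F_{\beta_k}(\bar x^k)-F^{\star}\bigr)+\tfrac{\tau_k^{2}B_{k+1}}{2}\|x^{\star}-\tilde x^{k}\|^{2},
\end{equation*}
with the smoothing terms of the form $[(1-\tau_k)(\beta_k-\beta_{k+1})/2-\tau_k(1-\tau_k)\beta_{k+1}/2]\sum_i \|\cdot\|_{S_i}^{2}$ vanishing thanks to $(1+\tau_k)\beta_{k+1}=\beta_k$. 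Dividing by $\tau_k^{2}B_{k+1}$ and using the identity $(1-\tau_k)/(\tau_k^{2}B_{k+1})=1/(\tau_{k-1}^{2}B_k)$ from Lemma~\ref{lem:choose_parameters_asgard} telescopes the recursion and gives $F_{\beta_{k+1}}(\bar x^{k+1})-F^{\star}\leq \tfrac{\tau_k^{2}B_{k+1}}{2}\|x^{\star}-\tilde x^{0}\|^{2}$. Concluding with the bound $\tau_k^{2}B_{k+1}\leq B_1/(k+1)$ of Lemma~\ref{lem:choose_parameters_asgard} yields the claimed rate. The only step requiring care is the bookkeeping when summing the Lipschitz-gradient constants, i.e.\ matching $\sum_i \|M_i\|_{S_i^{-1}}^{2}/\beta_{k+1}$ with the $B_{k+1}$ produced by the algorithm; everything else is a routine replica of the single-block argument.
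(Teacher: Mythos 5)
Your argument is correct, but it takes a different route from the paper. The paper does not redo the convergence analysis at all: it forms the direct sum $\GG=\bigoplus_{i=1}^m\GG_i$ with the block operator $M\colon x\mapsto(M_ix)_i$, the separable function $h\colon(y_i)_i\mapsto\sum_i h_i(y_i)$ and the block-diagonal $S$, observes that the maximization defining $h_\beta(\cdot;\dot y)$ decouples so that $h_\beta(y;\dot y)=\sum_i h_{\beta;i}(y_i;\dot y_i)$, and then invokes Proposition~\ref{t1} verbatim. You instead replicate the single-block proof, applying Lemma~\ref{lem:move_beta} to each $h_{\beta_{k+1};i}$ and summing. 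Both are sound, and they hinge on the same two facts: the additivity of the smoothing (which you use implicitly when summing the inequalities, and which the paper verifies explicitly as the decoupling of the max) and the fact that $\sum_i\|M_i\|_{S_i^{-1}}^2/\beta_{k+1}$ is a valid (upper bound on the) Lipschitz constant for $\nabla H_{\beta_{k+1}}$, matching the $B_{k+1}$ the algorithm actually uses. The paper's reduction is shorter and makes the proposition a corollary of the single-block result; your direct derivation is more laborious but has the minor virtue of making the Lipschitz-constant bookkeeping explicit, which the product-space reduction leaves to the reader (the true operator norm of the block $M$ is only bounded above by $\bigl(\sum_i\|M_i\|_{S_i^{-1}}^2\bigr)^{1/2}$, which suffices since the descent inequality only needs an upper bound).
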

\begin{proof}
Let $\GG = \bigoplus_{i=1}^m\GG_i$ be the Hilbert direct sum of $(\GG_i)_{1\leq i\leq m}$. 
For $y =(y_i)_{1\leq i\leq m}\in\GG$ and $\hat y = (\hat y_i)_{1\leq i\leq m}\in\GG$, we define
$\norm{y}=\sqrt{\sum_{i=1}^m\|y_i\|^2}$ and $\Pair{y}{\hat y}=\sum_{i=1}^{m}\Pair{y_i}{\hat y_i}$. 
Set
\begin{equation}
\begin{cases}
M\colon\HH\to\GG\colon x\mapsto (M_ix)_{1\leq i\leq m},\\
h\colon\GG\to\RX\colon (y_i)_{1\leq i\leq m}\mapsto\sum_{i=1}^mh_i(y_i),\\
S\colon\GG\to\RX\colon (y_i)_{1\leq i\leq m}\mapsto\sum_{i=1}^m S_i y_i.
\end{cases}
\end{equation}
Then Problem~\ref{pr2} reduces to Problem~\ref{p1}. 
It is easy to see that $S$ is a positive definite operator on $\GG$ and it induce the norm
\begin{equation}
(\forall y =(y_i)_{1\leq i\leq m}\in\GG)\quad \|y\|_S = \sqrt{\sum_{i=1}^m\|y_i\|_{S_i}^2}. 
\end{equation}
Moreover, for any $\beta\in\RPP$ and any $y\in\GG$, we have
\begin{equation}
\begin{aligned}
h_{\beta}(y; \dot y)
& =\max\limits_{\bar y\in\GG}\big\{\Pair{y}{\bar y} - h^*(\bar y) - \frac{\beta}{2} \norm{\bar y - \dot y}_S^2\big\}\\
&=\max\limits_{\bar y\in\GG}\bigg\{\sum_{i=1}^m\Pair{y_i}{\bar y_i} - \sum_{i=1}^mh_i^*(\bar y_i) -\frac \beta 2 \sum_{i=1}^m \norm{\bar y_i-\dot y_i}_{S_i}^2\bigg\}\\
&=\sum_{i=1}^m\max\limits_{\bar y_i\in\GG_i}\big\{\Pair{y_i}{\bar y_i} - h_i^*(\bar y_i) -\frac \beta 2 \norm{\bar y_i-\dot y_i}_{S_i}^2 \big\}\\
& = \sum_{i=1}^mh_{\beta;i}(y_i; \dot y_i),
\end{aligned}
\end{equation}
where $\dot y = (\dot y_1, \ldots, \dot y_m)$. The assertion then follows from Proposition~\ref{t1}.
\end{proof}

\subsection{Restarting}
It is possible to restart our variants of ASGARD using a fixed iteration restarting strategy, 
i.e., restart every $q$ iterations, as follows: 
\begin{equation}
\begin{cases}
\hat x^{k+1} \leftarrow \bar x^{k+1},\\
\dot y \leftarrow y^*_{\beta_{k+1}}(M\hat x^k; \dot y),\\
\beta_{k+1}\leftarrow \beta_0,\\
\tau_{k+1}\leftarrow 1.
\end{cases}
\end{equation}
The performance of different variants of ASGARD with restarting will be illustrated in the next section.

\section{Numerical experiments}
\label{sct:num}
\label{experiments}
\subsection{Sparse and TV regularized least squares} 
We consider the following regularized least squares problem
 \[
 \min_{x\in\RR^{100}} \frac 12 \norm{A x -b}^2 + \norm{x}_1 + \norm{D x}_1
 \]
 where $A$ is a randomly generated matrix of size 50 $\times$ 100 (Gaussian distribution, covariance 
 $\Sigma_{i,j} = \rho^{|i-j|}$ with $\rho = 0.95$), $b$ is randomly generated   
($b_i$ iid, with uniform distribution on $[1, 2]$) and $D$ is the explicit 1D discrete gradient operator. 
This problem is a special case of \eqref{prob1} with 
\begin{equation}
f(x) = \frac{1}{2}\|Ax- b\|^2 ,\quad g(x) =  \|x\|_1, \quad h(x) = \|x\|_1 \quad\text{and} \quad M = D.
\end{equation}
In this case,
\begin{equation}
\prox_{\gamma g}(x) = \operatorname{soft}_{[-\gamma,\gamma]}(x)\quad\text{and}\quad
\prox_{\gamma h^*}(x) = x - \gamma\operatorname{soft}_{[-\gamma^{-1}, \gamma^{-1}]}(\gamma^{-1}x),
\end{equation}
where
\begin{equation}
\operatorname{soft}_{[-t,t]}(x) = \operatorname{sign}(x)\otimes\max\{|x| - t, 0\},
\end{equation}
here $\otimes$ denotes component-wise multiplication.

When the plot has dash-dotted line, we consider the constraint $z = Dx$ and the augmented primal variable $(x, z)$, otherwise, we directly split with $h = \norm{\cdot}_1$. For ASGARD with restart, we restart the momentum in the algorithm every 100 iterations; vu-condat is Vu-Condat's algorithm and ladmm is the linearized ADMM method. For each algorithm we plot the difference between the current function value and the best function value encountered in the experiment (Figure~\ref{fig:l11DTV}).
\begin{figure}[!h]
\begin{center}
\includegraphics[width = 11 cm, trim = 0em 1.5em 0em 1.5em, clip]{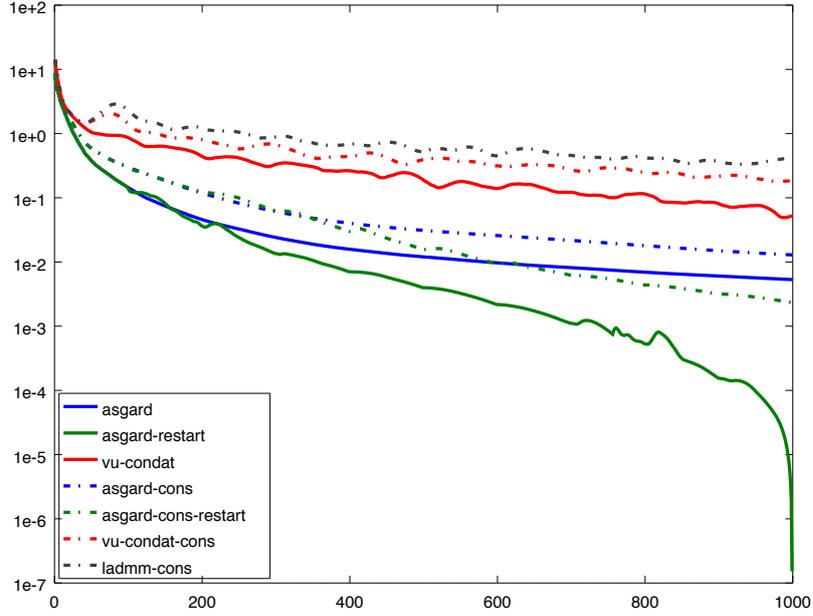}
\end{center}
 \caption{Behavior of various algorithms for the synthetic sparse and TV regularized least squares problem:  we plot the difference between the current function value and the best function value encountered vs iterations.}
 \label{fig:l11DTV}
\end{figure}

We also considered a medium-scale sparse and TV regularized problem on functional MRI data~\cite{tom2007neural}.
For given regularization parameters $\alpha>0$ and $r \in [0,1]$, we would like to solve the following  regression problem with regularization given by the sum of Total Variation (TV) and the $\ell_1$ norm:
\[
\min_{x \in \mathbb{R}^n} \frac{1}{2}\norm{Ax - b}_2^2 + \alpha\big( r \norm{x}_1 + (1-r) \norm{M x}_{2,1}\big).
\]
The problem takes place on a 3D image of the brains of size $40 \times 48 \times 34$. 
The optimization variable $x$ is a real vector with one entry in each voxel, that is $n$ = 65280. 
Matrix $M$ is the discretized 3D gradient. This is a sparse matrix of size 195840 $\times$ 65280
with 2 nonzero elements in each row.
The matrix $A \in \mathbb{R}^{768 \times 65280}$ and the vector $b \in \mathbb{R}^{768}$ correspond to 768 labeled experiments where each line of $A$ 
gathers brains activity for the corresponding experiment. Parameter $r$ tunes the tradeoff between the two regularization terms.
We chose $r = 0.1$ and $\alpha = 0.1$.

In this scenario, we set the objective as $f(x) = \frac{1}{2}\norm{Ax - b}_2^2 $, $g(x) = \alpha  r \norm{x}_1 $
and $h(y) = \alpha (1-r) \norm{y}_{2,1}$.
On Figure~\ref{ffmri:l13DTV}, we compared our algorithms against FISTA~\cite{BT09} 
with an inexact resolution of the proximal operator of TV, FISTA restarted every 30 iterations, and V\~{u}-Condat's algorithm~\cite{Vu13,condat13}. We can see that on this problem, 
ASGARD outperforms V\~u-Condat's algorithm but not FISTA. After careful inspection, we realize that 
ASGARD (and also V\~{u}-Condat's algorithm) spends too much time computing gradients of $f$ while FISTA 
spends much of its time to compute the proximity operator of $g$ (Figures~\ref{ffmri:l13DTVa}~and~\ref{ffmri:l13DTVb}). 
Our framework allows us to consider useful variants in this setting.
For instance, the use of old gradients makes ASGARD much faster in this problem in time. 
We can see on Figure~\ref{ffmri:l13DTV} that \texttt{ASGARD\_Old\_Gradients} 
outperforms FISTA. Moreover, combined with a restart every 400 iterations, we obtain the best performance among the algorithms we test.

\begin{figure}[!h]
\begin{center}
\includegraphics[width = 16 cm, trim={0em, 0em, 0em, 0ex}, clip]{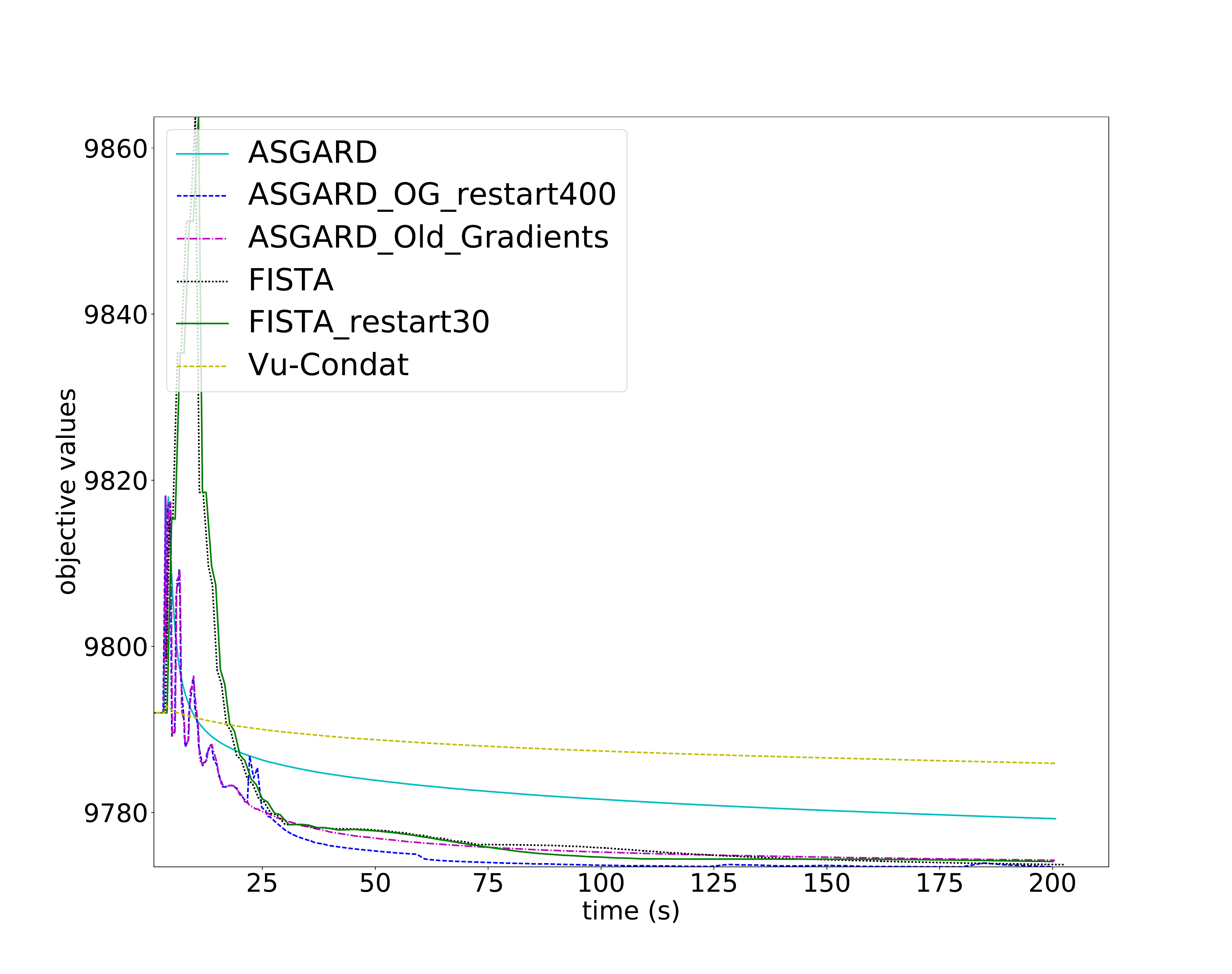}
\end{center}
 \caption{Comparison of  various algorithms for the functional MRI problem: function value against computational time.}
 \label{ffmri:l13DTV}
\end{figure}

\begin{figure}[!h]
\begin{center}
\includegraphics[width = 13 cm, trim={0em, 0em, 0em, 0ex}, clip]{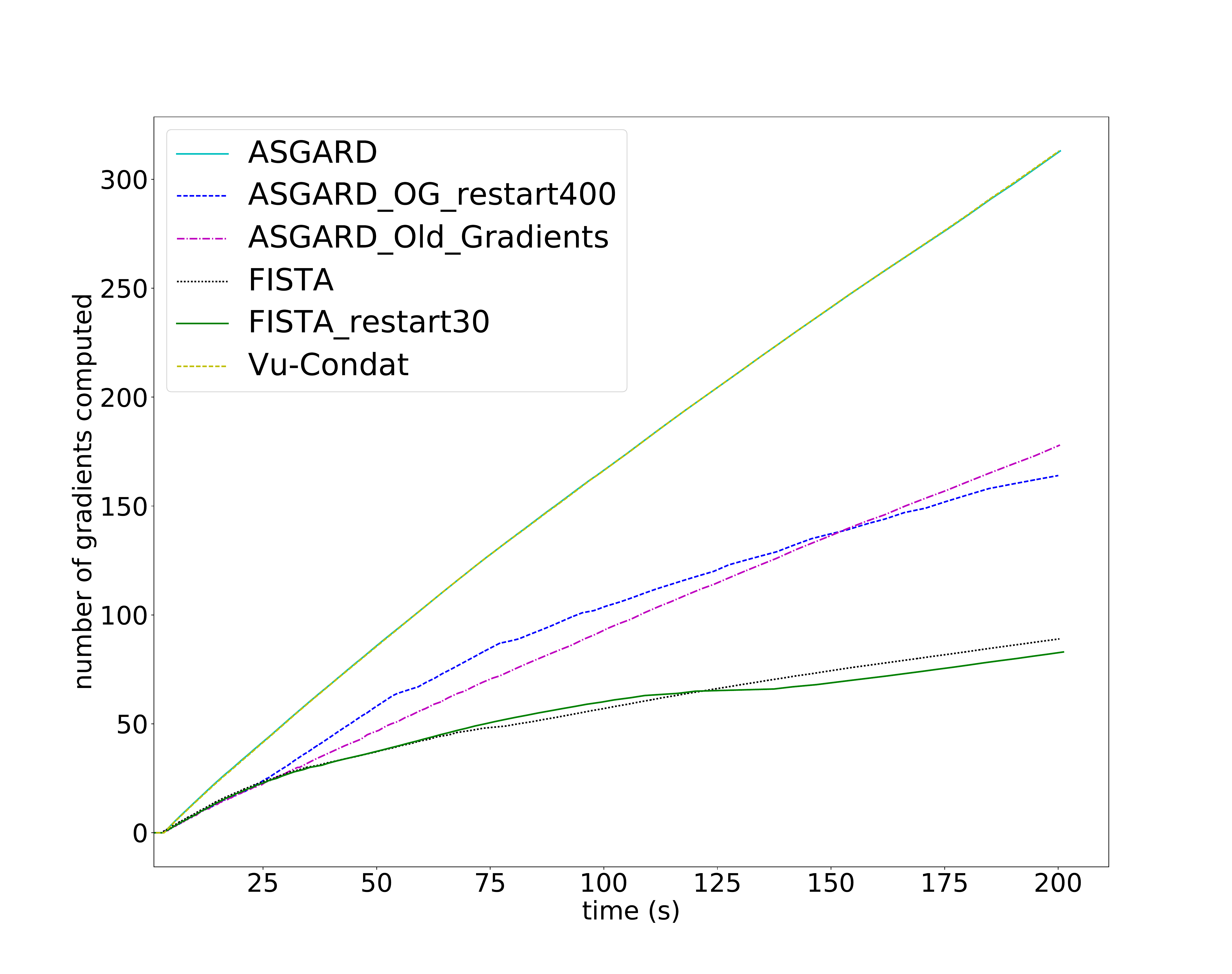}
\end{center}
 \caption{Comparison of  various algorithms for the functional MRI problem: number of gradients evaluations against computational time.}
 \label{ffmri:l13DTVa}
\end{figure}
\begin{figure}[!h]
\begin{center}
\includegraphics[width = 13 cm, trim={0em, 0em, 0em, 0ex}, clip]{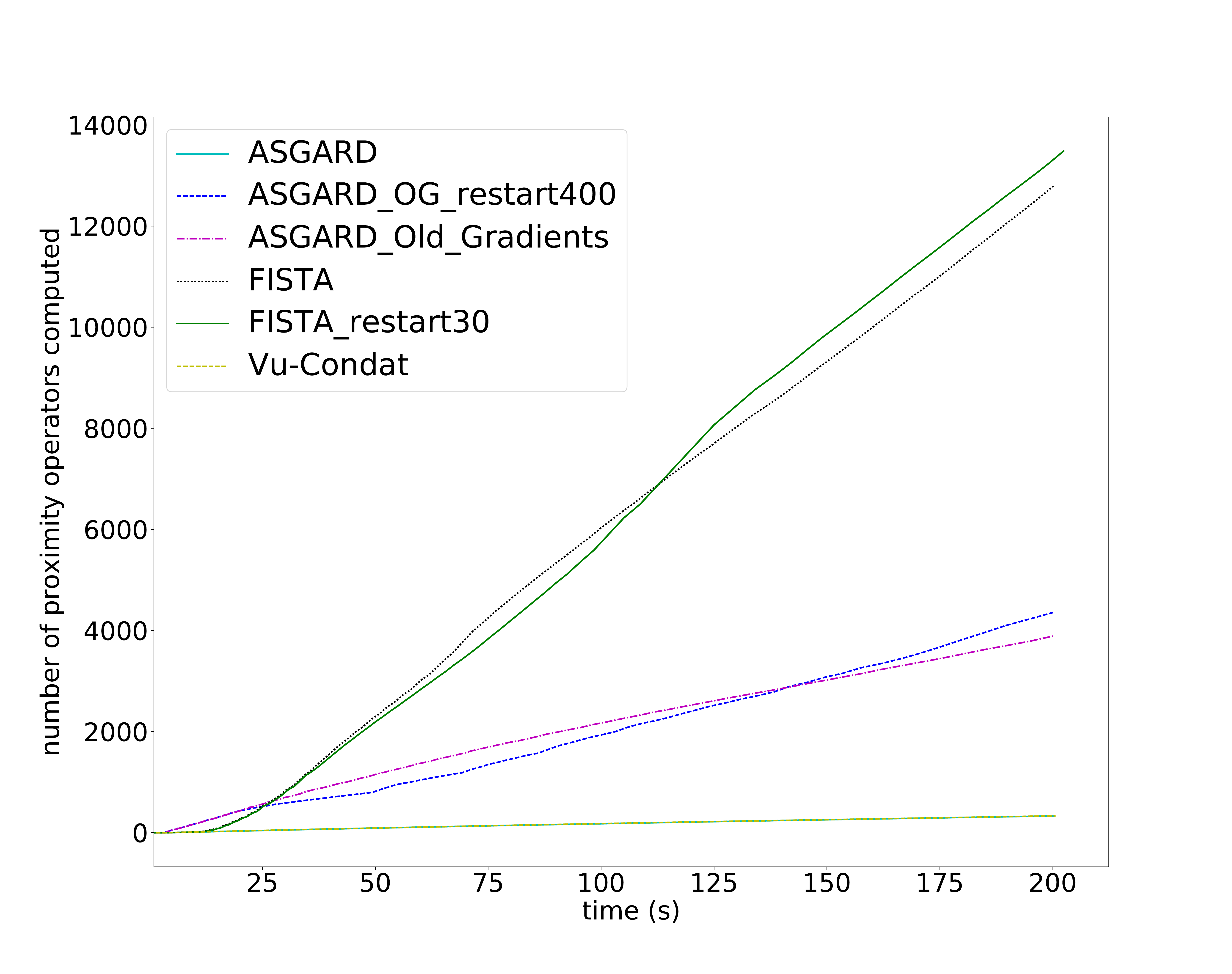}
\end{center}
 \caption{Comparison of  various algorithms for the functional MRI problem: number of prox evaluations against computational time.}
 \label{ffmri:l13DTVb}
\end{figure}

\subsection{Quantum properties prediction}

In materials science, quantum properties such as energy requires expensive calculations based on the density functional theory (DFT). 
Machine learning has been recently used to predict such properties for new molecules based on dataset derived by DFT.  
Let us represent the dataset by $\{(r_i,  p_i)\}_{i=1}^N$ where $r_i\in\mathbb{R}^n$ 
is Coulomb matrix representation \cite{Rupp:2015aa} of  $i$-th molecule and $p_i\in\mathbb{R}$ is its properties.  
In this experiment, the Laplacian kernel, i.e., $K(r,r') = \exp(-\|r -r'\|_1/\sigma)$ 
with $\|\cdot\|_1$ is $\ell_1-$norm of $\mathbb{R}^n$, is used to measure the dissimilarity between molecules.
A quantum property of a molecule with representation $r$ is assumed to have the following form
\begin{equation}
e(r) = \sum_{i=1}^Nx_iK(r,r_i).
\end{equation}
The regression coefficients $x = (x_1, \ldots, x_N)^T$ are obtained 
by solving the following elastic net regularized minimization problem
\begin{equation}
\label{elas1}
\minimize{x\in\mathbb{R}^N}{\|Kx - p\|_1 + \frac{\lambda}{2}x^TKx 
+ (1-\lambda)\|x\|_1},
\end{equation}
here $p = (p_1, \ldots, p_N)^T$ and $K_{ij} = K(r_i,r_j)$. 
Note that \eqref{elas1} is a particular case of \eqref{prob1} with 
\begin{equation}
f(x) = \frac{\lambda}{2}x^TKx ,\quad 
g(x) =  (1- \lambda)\|x\|_1, \quad h(y) = \|y - p\|_1 
\quad\text{and}\quad  M \colon x\mapsto Kx.
\end{equation}
In this case,
\begin{equation}
\prox_{\gamma g}(x) = \operatorname{soft}_{[-(1- \lambda)\gamma,(1- \lambda)\gamma]}(x)
\quad\text{and}\quad
\prox_{\gamma h^*}(x) = x - \gamma\big(p 
+ \operatorname{soft}_{[-\gamma^{-1}, \gamma^{-1}]}(\gamma^{-1}x - p)\big).
\end{equation}
In Figure~\ref{fig:c1}, we compare the behavior of different versions of our ASGARD 
with Vu-Condat's algorithm \cite{condat13,Vu13} and Combettes-Pesquet's algorithm \cite{CP12} 
on the dataset of $7211$ molecules in \cite{Rupp:2015aa} in which $50\%$ molecules 
are used to train.\cite{CP12}.
\begin{figure}[!h]
\begin{center}
\includegraphics[width = 13 cm]{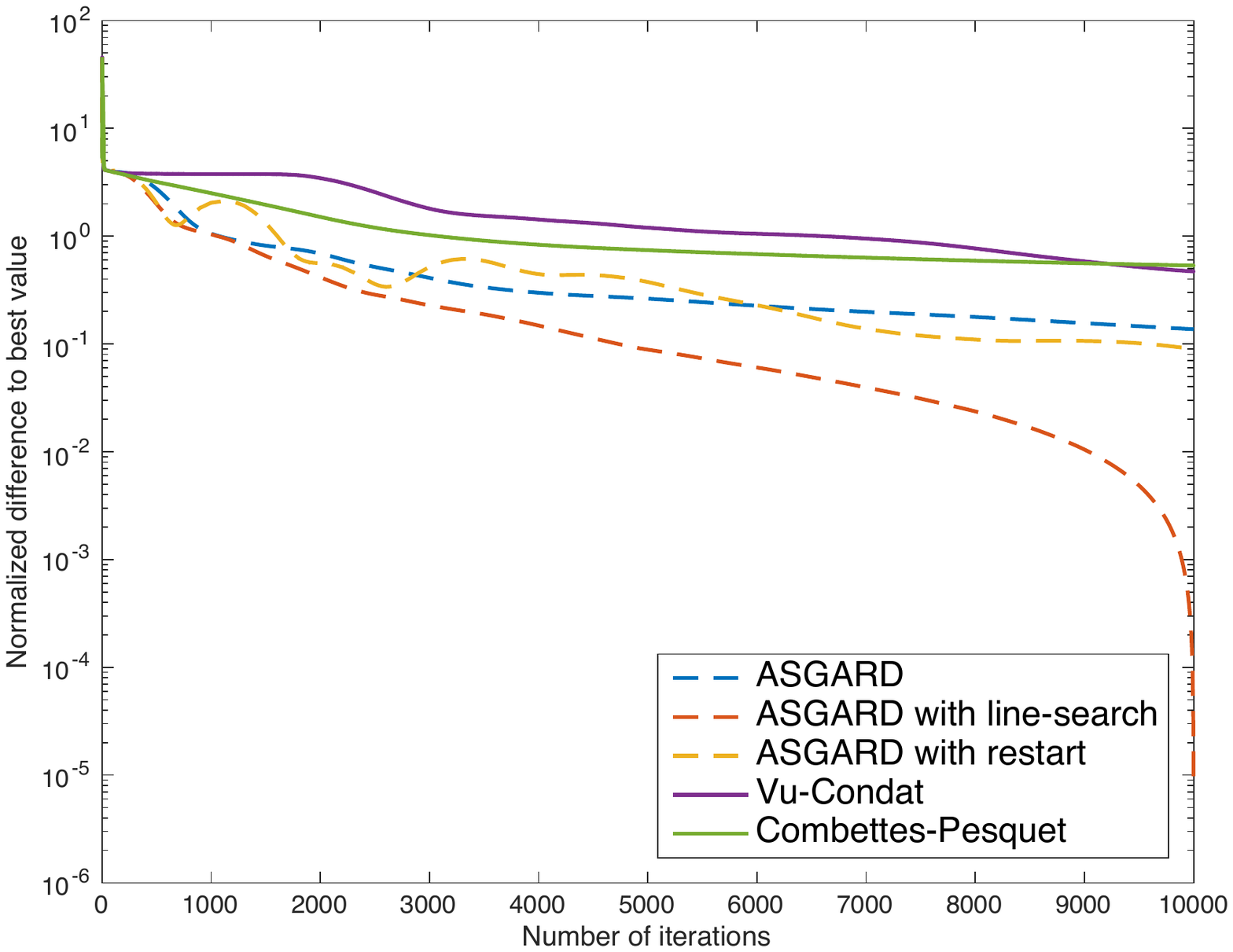}
\end{center}
 \caption{Comparison with existing algorithms ($\sigma = 4000$ and $\lambda = 0.001$).}
 \label{fig:c1}
\end{figure}

\section{Conclusion}
In this paper, we build, based on the homotopy-based smoothing and acceleration technique of [ASGARD], a new method to solve a large class of generic convex optimization problems where the objective function is split into a sum of one smooth term and two non-smooth terms, one of which is combined with a linear operator. The variants of our method with line-search and old gradients benefits from the local smoothness of nonsmooth function and can avoid computing the whole gradient of the smooth function. In contrast to the existing methods in the literature, our method also features rigorous convergence guarantees. Numerical experiments with real-world problems illustrate the superiority of our method vs. the other state-of-the-art algorithms.

\section*{Acknowledgments.}
The work of V. Cevher and Q. V. Nguyen is supported by the NCCR MARVEL, funded by the Swiss National Science Foundation.
The work of O. Fercoq is supported by a public grant as part of the
Investissement d'avenir project, reference ANR-11-LABX-0056-LMH,
LabEx LMH and PGMO.

\bibliographystyle{plain}
\bibliography{bibref}
\end{document}